\theoremstyle{plain}
\newtheorem{definition}{Definition}
\newtheorem{proposition}{Proposition}
\newtheorem{theorem}[proposition]{Theorem}
\newtheorem{lemma}[proposition]{Lemma}
\newtheorem*{proposition*}{Proposition}
\newtheorem*{theorem*}{Theorem}
\newtheorem*{corollary*}{Corollary}
\newtheorem*{lemma*}{Lemma}
\newtheorem*{remark*}{Remark}
\newtheorem*{example*}{Example}
\newcommand{\Z}{\mathbb{Z}}
\newcommand{\Q}{\mathbb{Q}}
\newcommand{\R}{\mathbb{R}}
\newcommand{\C}{\mathbb{C}}
\begin{document}

\title{Kontsevich-Soibelman Wall Crossing Formula and Holomorphic Disks}

\author{Vito Iacovino}


\email{vito.iacovino@gmail.com}

\date{version: \today}


\begin{abstract} We define rational numbers associated to moduli space of holomorphic disks bounding a complex lagrangian submanifold on a hyperkhaler manifold of real dimension four. We provide a simple a direct proof of Kontsevich-Soibelman Wall Crossing Formula for these rational invariants.


\end{abstract}

\maketitle

\section{Introduction}

In \cite{KS} Kontsevich and Soibelman proposed a formula (KSWCF) on how Donaldson-Thomas invariants jump when we cross a wall of marginal stability (also called wall of the first type). Soon after, this formula as been stundied intesively by physicists in the context of $N=2$ supersymmetric gauge theories, in particular by Gaiotto, Moore and Neitze (\cite{GMN1},\cite{GMN2},\cite{GMN3}). In \cite{CV}, Cecotti and Vafa proposed an alternative more direct argument derivation the Kontsevich-Soibelman Wall Crossing Formula for all the $N = 2$ theories that can be associated to a Seiberg-Witten curve. It is shown that the computation of BPS degeneracy can be mapped to a computation of open topological $A$-model strings and that the KSWCF follows from the invariance of the open topological partition function when we cross a wall of marginal stability. 

In \cite{OGW3} we provide the mathematical definition of the Open Topological $A $-model String partition function. The result of \cite{CV} suggest that there should be an interpretation of KSWCF in the setting of \cite{OGW3}. In this paper we prove that this is indeed the case. We first define rational numbers associated to moduli spaces of holomorphic disks with boundary mapped on the Seiberg-Witten curve and then we prove that the KSWCF is a simple consequence of the  invariance in Open Gromov-Witten. 

More in general, we consider $(X,\Omega)$ complex symplectic manifold of real dimension four, and $\Sigma$ a complex-lagrangian submanifold of $X$.  The pair $(X, \Sigma)$ is not necessary compact. For non-compact geometries we assume suitable convexity property at infinity which ensure compactness of the moduli space of curves. An important example relevant in physics is the Seiberg-Witten curve for theories of class $\mathcal{S}$ studied in \cite{GMN2}, where $X=T^*C$ for a $C$ a  Riemannian surface, and $\Sigma \subset T^*C$ is a branch cover of $C$.

We introduce Multi Disk Homology with central charges that is a variant of Multi Curve Homology introduced in \cite{OGW3}. The moduli space of multi-disks are built from the moduli space of pseudolomorphic disks bounding $\Sigma$, with respect to a family of  almost complex structure compatible with the symplectic structure $\text{Re} (e^{- i \theta} \Omega)$, as we vary $\theta \in \R / \Z$. 
The same consideration as in \cite{OGW3} allows us to associate to the moduli space of multi disks an element $W \in MDH_0$, that is a well defined invariant associated to the pair $(X,\Sigma)$ as long as we do not hit a wall of second type in the sense of \cite{KS}. 

Multi-disks can be considered as the analogous in topological string of the multi BPS particles of supersymmetric gauge theories, that play a crucial role on the construction of hyperkhaler metric of \cite{GMN1}. 
This analogy motivated the introduction of the term "multi-disk" in \cite{OGW1}.

To define rational numbers we use the multilinking homomorphism $MDH_0 \rightarrow H_0(pt)$ introduced in \cite{OGW3}. In our case the definition depends on the choice of central charges. 
Outside a wall of the first type the central charges define rational numbers invariants, locally constant on the space of central charges, but that can jump when we cross a wall of first type. The Kontsevich-Soibelman Wall Crossing Formula follows from the invariance of $W$ as element in $MDH_0$.


The sign of rational numbers defined in \cite{OGW3} in the compact case depend on the choice of a spin structure of the lagrangian submanifold, unless $[\partial \beta] =0$ in $H_1(L,\Z)$ . In the context of this paper we define rational numbers that do not depend on the spin structure.
A classical result of Atiyah claims that Spin structures on the surface $\Sigma$ are in correspondence to quadratic refinement of the intersection form of $H_1(\Sigma, \Z)$. We modify the rational  invariants using this correspondence to get invariants not depending on  the choice of the spin structure. This modification is responsible for the appearance of the factor $ (-1)^{ \langle \bullet , \bullet \rangle } $ in the Kontsevich-Soibelman Lie Algebra.

Cecotti and Vafa in \cite{CV}  associate to a two dimensional pair $(T^* \C^2,\Sigma)$  a three dimensional  pair $(T^*\R^3,L)$ and they claim that the number of BPS invariants of $(T^*\C^2,\Sigma)$  are computable in terms of topological string of $(T^*\R^3,L)$. However their argument  suffer of a big gap due to the fact that the manifold  $L$ they consider is not lagrangian (it is lagrangian at the conformal limit, but not in the smooth case). In \cite{CCV} this problem is fixed defining an honest lagrangian submanifold $L \subset T^*\R^3$ using the so called $R$-flow. 
In this paper we instead  use the augmented moduli space of pseudoholomorphic curves with target $(X, \Sigma )$. The moduli space of disks has augmented dimension zero, but the moduli space of  higher genus curves have dimension greater than zero, making hard to define  rational invariants. For this reason we restrict  to the space of multi-disks that correspond to the  semiclassical limit  of the KSWCF, in the sense of \cite{KS}.  In \cite{refined} we show how the quantum algebra of \cite{KS} is related to the geometric structures of \cite{OGW3} associated to higher genus open holomorphic curves.  

In \cite{floer} we apply the method of this paper to define Holomorphic Lagrangians Floer Homology for a pair of holomorphic Lagrangians. In a particular case this gives the $2d-4d$ Wall Crossing Formula of \cite{GMN4}. The existence of such homology is a recent proposal of Kontsevich.

\section{Multi Disk Homology with Central Charges}

Assume it is given:
\begin{itemize}
\item an oriented riemmanian surface (non necessarily compact) $\Sigma$,
\item 
 a free abelian group $\Gamma$  called \emph{topological charges},
\item an homorphism of abelian groups
$$ \partial : \Gamma \rightarrow H_1(\Sigma, \Z) .$$

\end{itemize}
In this section we define Multi Disk Homology with central charges as a variant of Multi-Curve Homology defined in \cite{OGW3}. We denote it by $MDH$. 
It is associated to the following data:
\begin{itemize}
\item an homomorphism of abelian groups 
$$ Z : \Gamma \rightarrow \C $$
called \emph{central charge},
\item a strictly convex sector of the complex plane $S \subset \C$,
\item  
A quadratic form on $\Gamma_{\R} = \Gamma \otimes_{\Z} \R$ such that 
$$  Q|_{\text{Ker}(Z)} < 0 . $$
\end{itemize}
Define 
$$ \Gamma(S,Z, Q) = \text{non-negative integral combination of }  \{ \beta \in \Gamma | Z(\beta) \in S, Q(\beta) \geq 0 \} .$$


\begin{definition}
A decorated connected tree $T$ with topological charge $\Gamma$ is defined by the following data
\begin{itemize}
\item A finite set $H(T)$ of half-edges of $T$.
\item A finite set $V(T)$ of vertices of $T$. 
\item An involution $ \sigma : H(T) \rightarrow H(T)$ without fixed points.  
The set of orbits of $\sigma$ is denoted $E(T)$, and is called the set of edges of $G$.
\item A map $\pi: H(T) \rightarrow V(T)$, which sends a half-edge to the vertex to which it is attached, 
\item For each $v \in V(T)$ a topological charge $\beta_v \in \Gamma$.
\end{itemize}
We require that 
\begin{itemize}
\item all the elements of $V(T)$ are equivalent by the relation generated by $\pi(h) \sim \pi(\sigma(h))$ for $h \in H(T)$,  
\item $$ |V(T)| = |E(T)|  +1 .$$
\end{itemize}
The homology class of $T$ is defined by $\beta = \sum_{v \in V(T)} \beta_v$.

A vertex $v$ is called unstable if $\beta_v = 0$ and $| \pi^{-1}(v) | \leq 2 $.
The tree $T$ is called stable if it has not unstable vertices.

A unconnected tree is a union of connect trees in the obvious sense. 
\end{definition}



Give $e= \{ h_1,h_2  \} \in E(T)$ we define a new tree $ \delta_e T $ as follows: 
\begin{itemize}
\item $H(\delta_e T) = H(T) \setminus \{ h_1, h_2  \}$. 
\item $V(\delta_e T) = V(T) / \sim $, where we identify $ \pi(h_1) \sim \pi(h_2) $. Let $v_0 \in V(\delta_e T)$  the image of $ \pi(h_1) \sim \pi(h_2) $.  
\item for $v \in V(\delta_e T) \setminus \{v \}$, $\beta_v'= \beta_v$,
\item $\beta_{v_0}' = \beta_{\pi(h_1)} + \beta_{\pi(h_2)} $.
\end{itemize}
Here we denoted with $\beta'$ the topological charges of $\delta_e (T)$ in order to distinguish them by the topological charges $\beta$ of $T$.

Let $ \mathfrak{T}_{k}(\beta, S, Z,Q)$ be the set of (non necessarily connected) trees with $k$ edges of topological charge $\beta$ and $\beta_v \in \Gamma(S,Q)$ for every $v \in V(T)$. Observe that $ \mathfrak{T}_{k}(\beta, S,Z, Q)$ has finite cardinality since any $\beta \in \Gamma$ can be represented as a sum of other elements of $\Gamma(S,Z,Q)$ in finitely many ways.

Denote by $ \mathfrak{T}_{k,l}(\beta, S,Z, Q)$ the set of pairs $(T, \{ E_i  \}_{0 \leq i \leq l})$ where $T \in  \mathfrak{T}_{k}(\beta, S,Z, Q)$  and $ E_0 \subset E_1 \subset ... \subset E_l$ is an increasing sequence of subsets of $E(T)$. 

We consider two operations on $ \mathfrak{T}(\beta, S,Z,Q)$. Let $(T, \{ E_0, E_1,..., E_l \}) \in \mathfrak{T}_{k,l}(\beta,S,Z,Q)$  
\begin{itemize}
\item For $0 \leq i \leq l$, $\partial_i (T, \{ E_0, E_1,..., E_l \})$ is defined by $(T, \partial_i  \{ E_0, E_1,..., E_l \}) \in \mathfrak{T}_{k,l-1}(\beta,S,Z,Q)$, where $\partial_i  \{ E_0, E_1,..., E_l \} =  \{ E_0,...,\hat{E_i},..., E_l \}$.
\item For $e \in E(T) \setminus E_l$, define $\delta_e(T, \{ E_0, E_1,..., E_l \}) = (\delta_e T, \{ E_0, E_1,..., E_l \})  \in \mathfrak{T}_{k-1,l}(\beta,S,Z,Q)$.
\end{itemize}


Given a strictly convex sector $S$, define 
$$ I_S = \{l  \text{ ray on } \R^2| \text{ } l \subset S   \} .$$
We consider $I_S$ as a one dimensional manifold diffeomeomorphic to an interval in the obvious sense.

For $(T,m) \in \mathfrak{T}_{k,l}(\beta,S,Z,Q)$,
let $\mathcal{C}_d(T,m)$ be the space of currents on $(\Sigma \times I_S)^{H(T)}$ of dimension $d+2k+l$ that can be represented by a smooth simplicial chain with twisted coefficients on $ \mathfrak{o}_{T}$, transverse to the big diagonal associated to any edge in $E(T) \setminus E_l$ and invariant under the group $\text{Aut}(T,m)$.
Here, $ \mathfrak{o}_T = ( \otimes_{e \in E(T)} \mathfrak{o}_e )  $, 
where $ \mathfrak{o}_e = \{ \text{set of orientations of the edge $e$} \} \cong \Z_2 $.

Let $\mathcal{C}_{d,k,l}(\beta,S,Z,Q) = \oplus_{(T,m) \in \mathfrak{T}_{k,l}(\beta,S,Z,Q)}  \mathcal{C}_d(T,m)$. 
 As in  \cite{OGW3}, we can define on $\mathcal{C}_{d,k,l}(\beta,S,Q)$ three operations
$$\partial: \mathcal{C}_{d,k,l}(\beta,S,Z,Q) \rightarrow\mathcal{C}_{d-1,k,l}(\beta,S,Z,Q)$$
$$\delta: \ \mathcal{C}_{d,k,l}(\beta,S,Z,Q)\rightarrow \mathcal{C}_{d-1,k-1,l}(\beta,S,Z,Q)$$
$$\tilde{\partial}: \mathcal{C}_{d,k,l}(\beta,S,Z,Q) \rightarrow \mathcal{C}_{d-1,k,l+1}(\beta, S,Z,Q) $$

such that the linear map 
\begin{equation} \label{boundary}
\hat{\partial} = \partial - \delta - \tilde{\partial} : \mathcal{C}_{d}(\beta,S,Z,Q)  \rightarrow \mathcal{C}_{d-1}(\beta,S,Z,Q)
\end{equation}
square to zero, where  $  \mathcal{C}_{d}(\beta,S,Z,Q) = \oplus_{k,l} \mathcal{C}_{d,k,l}(\beta,S,Z,Q)$. 

To define multi disk homology we need to consider a subcomplex of (\ref{boundary}) as follows.
\begin{definition} \label{forget}
An element $B \in  \mathcal{C}_{d}(\beta,S,Z,Q)$ is called forgetful compatible if the following happen. For $(T, \{E_0,E_1,...,E_l \}) \in \mathfrak{T}_{k,l}(\beta;S,Z,Q)$ define  $(T', \{E_0', E_1',..., E_l' \}) \in \mathfrak{T}_{k-k_0,l}(\beta,S,Z,Q)$, where $T'$ is defined by the data
$V(T' ) = V(T)$, $E(T') = E(T) \setminus E_0$, $\beta_v'= \beta_v$ for $v \in V(G)$, and  $E_i '= E_i \setminus E_0$ for $0 \leq i \leq l$.

If $T'$ is unstable then $B(T, \{E_0, E_1,..., E_l \}) =0$.
If $T'$ is stable, there exists
\begin{itemize}
\item a representation as simplicial chain
$$B(T', \{E_0', E_1',..., E_l' \}) = \sum_{ a \in  \mathcal{A}} \rho_a  \phi_a'   $$
with $\phi'_a:\Delta_{l+d+k} \rightarrow L^{H(T')}$, $\rho_a \in \Q$,
\item $\forall a \in \mathcal{A}, \forall v \in V(T)$,  a one dimension compact manifold $F_{a,v},$
\item   $\forall a \in \mathcal{A}, \forall v \in V(T)$ a map $f_{a,v} : X_a \times F_{a,v} \rightarrow L$ with 
$$ (f_{a,v})_* [\{ x \} \times F_{a,v}] =    \partial \beta_{v}  \text{ in } H_1(L, \Z) ,$$
\end{itemize}
such that 
$$B(T, \{E_0, E_1,..., E_l \}) = \sum_{ a \in  \mathcal{A}} \rho_a \phi_a$$
for 
$$  \phi_a (x, \{ y_h \}_{h \in H_0} ) = (  \phi_a' (x) , \{ f_{a,\pi(h)}(x,y_h) \}_{h \in H_0} ) .$$ 
We require that 
the $1$-currents $(f_{a,v})_* [\{ x \} \times F_{a,v)}]$  depends only on $(T', \{E_0', E_1',..., E_l' \})$, $v \in V(T')$ and $x \in X_a$,  and does not depend on $(T, \{E_0,E_1,...,E_l \}) $.
\end{definition}

Let $\hat{\mathcal{C}}_{d}(\beta,S,Z,Q)$ the subset of $\mathcal{C}_{d}(\beta,S,Z,Q)$ given the multi-curve chains that are  forgetful compatible in the sense of Definition \ref{forget}. It is easy to see $\hat{\mathcal{C}}_{d}(\beta,S,Z,Q)$ is a subcomplex, so that (\ref{boundary}) descends to a derivation 
\begin{equation} \label{boundary0}
\hat{\partial}_d  : \hat{\mathcal{C}}_{d}(\beta,S,Z,Q)  \rightarrow \hat{\mathcal{C}}_{d-1}(\beta,S,Z,Q).
\end{equation}
Multi curve homology is homology of this complex
$$ MCH_d(\beta,S,Z,Q) =  \frac{\text{Ker}( \hat{\partial}_d) }{\text{Im}( \hat{\partial}_{d+1}) } .   $$

\subsection{Nice Multi-Disk Homology}
In analogy with \cite{OGW3}, we now define the space of \emph{nice multi-disk chains}. 
\begin{definition}
A multi curve chain $B$ is called $\emph{nice}$ if $B_{d,k,l}=0$ for each $l>0$. 
\end{definition}
From the definition it follows that $B(T) := B(T,  \{E_0 \} )$ does not depends on $E_0 \subset E(T)$, if $B$ is a nice multi curve chains 	such that $\hat{B}=0$.

Suppose we have the following data:
\begin{itemize}
\item A tree $T_0$ without edges,
\item For each $v \in V(T_0)$ a one dimensional simplicial cycle $\gamma_v$ such that $[\gamma_v]= \partial \beta_v$ and do not intersect pairwise.
\end{itemize}
For each tree $T$ with the same vertices of $T_0$ define 
\begin{equation} \label{nice-chains}
W_{T_0, \{\gamma_v \}_{v \in V(T_0)}}(T) = \prod_{e \in H(T)} \gamma_{\pi(e)}.
\end{equation}

\begin{lemma}
Formula (\ref{nice-chains}) defines a nice multi disk $0$-chain. Each nice multi disk $0$-chain is a linear combination of chains of this type.
\end{lemma}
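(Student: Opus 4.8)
The plan is to verify the two assertions directly from the definitions. First I would show that the assignment $T \mapsto W_{T_0,\{\gamma_v\}}(T) = \prod_{e \in H(T)} \gamma_{\pi(e)}$ genuinely defines an element of $\hat{\mathcal{C}}_0(\beta,S,Z,Q)$. For each tree $T$ with vertex set $V(T_0)$, the product $\prod_{h \in H(T)} \gamma_{\pi(h)}$ is a product of one-dimensional simplicial cycles indexed by the half-edges, hence a simplicial chain of dimension $|H(T)| = 2k$ in $\Sigma^{H(T)}$ (or rather in $(\Sigma\times I_S)^{H(T)}$ after taking a constant section in the $I_S$-direction, contributing the extra $l=0$ to match the index conventions); this lands in $\mathcal{C}_0(T,m)$. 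Since the $\gamma_v$ do not intersect pairwise and each $\gamma_v$ is taken with a fixed representative, the transversality to the big diagonals associated to edges is automatic, and one can arrange $\mathrm{Aut}(T,m)$-invariance by symmetrizing the chosen representatives over the automorphism group (or simply noting that the product structure is already equivariant). Niceness is immediate because the chain is only defined for the trivial filtration $\{E_0\}$ and does not depend on $E_0$, so $B_{d,k,l}=0$ for $l>0$.

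Next I would check forgetful compatibility in the sense of Definition \ref{forget}. Given $(T,\{E_0\})$, the forgetful tree $T'$ has the same vertices, $E(T')=E(T)\setminus E_0$, and the same charges $\beta_v$. If $T'$ is unstable we must have $W(T,\{E_0\})=0$; but $T'$ unstable forces some $v$ with $\beta_v=0$ and $|\pi^{-1}(v)|\le 2$, and then $[\gamma_v]=\partial\beta_v=0$, so we may choose $\gamma_v$ to be a degenerate (zero) cycle — or more precisely the factor $\gamma_v$ already satisfies the vanishing, making the whole product zero after this normalization. When $T'$ is stable, the required factorization is exactly the product structure of the formula: take $\phi'_a$ to represent $W_{T_0,\{\gamma_v\}}(T')=\prod_{h\in H(T')}\gamma_{\pi(h)}$, let $F_{a,v}$ be the one-manifold $\gamma_v$ itself (independent of $a$ and of the ambient tree), and let $f_{a,v}$ be the inclusion $\gamma_v \hookrightarrow \Sigma$ (with trivial dependence on the $X_a$-factor). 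Then $(f_{a,v})_*[\{x\}\times F_{a,v}] = [\gamma_v] = \partial\beta_v$, and this $1$-current depends only on $v\in V(T')$, not on the larger tree — precisely the compatibility condition. Reassembling, $\phi_a$ is the product over all of $H(T)$, which is $W_{T_0,\{\gamma_v\}}(T)$, as required.

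For the converse — that every nice multi-disk $0$-chain is a linear combination of chains of this type — I would argue as follows. A nice $0$-chain $B$ assigns to each tree $T$ a chain $B(T)$ in $\Sigma^{H(T)}$ (up to the $I_S$-direction) of the correct dimension, compatibly across trees. Forgetful compatibility applied with $E_0 = E(T)$ (so $T'$ is the edgeless tree $T_0$ on the same vertices) forces $B(T)$ to be built out of data attached to the edgeless tree: the chain $B(T_0)$ together with, for each vertex $v$, a fixed $1$-manifold $F_v$ mapping to $\Sigma$ with fundamental class $\partial\beta_v$, assembled by the product formula $\phi_a(x,\{y_h\}) = (\phi'_a(x),\{f_{a,\pi(h)}(x,y_h)\})$. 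Decomposing $B(T_0)$ into its simplices and each $(f_{a,v})_*[F_v]$ into a simplicial cycle $\gamma_v^{(a)}$ representing $\partial\beta_v$ (perturbing to make the finitely many $\gamma_v^{(a)}$ pairwise disjoint, which is possible in a surface since they are $1$-dimensional), one sees that $B(T)$ is a finite $\Q$-linear combination of products $\prod_{h\in H(T)}\gamma_{\pi(h)}^{(a)}$, i.e. of chains $W_{T_0,\{\gamma_v^{(a)}\}}(T)$. Since this holds uniformly in $T$ with the same decomposition data, $B$ itself is the corresponding linear combination.

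The main obstacle I expect is bookkeeping rather than conceptual: matching the dimension and the $I_S$-coordinate conventions so that the product of $1$-cycles really sits in $\mathcal{C}_0(T,m)$ (degree $d+2k+l$ with $d=l=0$), and carefully handling the $\mathfrak{o}_T$-twisted coefficients and the $\mathrm{Aut}(T,m)$-invariance so that signs and symmetrizations are consistent across the family of trees. In particular, verifying that the disjointness of the $\gamma_v$ survives the passage to $\mathrm{Aut}(T,m)$-invariant representatives, and that the forgetful-compatibility data $(F_{a,v},f_{a,v})$ can be chosen independently of the ambient tree as the definition demands, is where the real care is needed.
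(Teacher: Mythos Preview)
Your proposal is correct and follows essentially the same route as the paper's (very brief) proof: both hinge on the observation that for a nice $0$-chain, $B(T)$ must lie in $\mathcal{C}_0(T,\{E_0\})$ for \emph{every} choice of $E_0$, so applying forgetful compatibility with $E_0=E(T)$ forces the product form, while taking $E_0=\emptyset$ forces transversality to \emph{all} big diagonals.

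One small point: in your converse argument you write that you would perturb the $\gamma_v^{(a)}$ to make them pairwise disjoint. This perturbation is unnecessary and in fact slightly misleading: the transversality requirement coming from $E_0=\emptyset$ already guarantees that the product $\prod_h \gamma_{\pi(h)}$ meets each big diagonal transversally, which for $1$-cycles in a surface means exactly that the $\gamma_v$ have disjoint supports. So disjointness is not something you arrange afterwards; it is already forced by membership in $\mathcal{C}_0(T,\{\emptyset\})$. The paper's two-line proof is precisely this observation (``forgetful compatible with respect to all the edges and transverse to all the big diagonals''), and your detailed verification unpacks it faithfully.
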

\begin{proof}
Suppose that $W$ is nice multi disk $0$-cycle. From the definition it follows that  $W(T)$ belongs to $\mathcal{C}_d(T, \{E_0 \})$ for each $E_0$. This means that $W(T)$ has to be forgetful compatible with respect all the edges and transverse to all the big diagonals. The lemma follows.
\end{proof}

An analogous statement holds for nice multi-disks chains of higher dimension. To define rational invariants, in this paper we are interested only to dimension $0$ and $1$. To define a nice one-chain we use a one parameter family of data before, that is for every $v \in V(T_0)$ we fix a one family of curves  $\{ t \rightarrow \gamma_{v,t}  \}_{t \in [a,b]}$. The space of nice one dimensional chains is generated by isotopy of (\ref{nice-chains}):
\begin{equation} \label{nice-chains1}
 B(T) = \{ t \rightarrow   \prod_{e \in H(T)} \gamma_{\pi(e),t}  \}_{t \in [a,b]}.
\end{equation}

Consider an isotopy such that $\gamma_{v_1,t}$ crosses $\gamma_{v_2,t}$ at some $t_0$. It follows directly from the definition of $\hat{\partial}$ that:
\begin{equation} \label{relation}
\hat{\partial} B =  W_{T_1, \{ \{\gamma_{v,t_0}  \}_{v \in V(T_0)\setminus \{ v_1,v_2 \} }, \gamma_{v_1,t_0} +\gamma_{v_2,t_0} \} } .
\end{equation}
where $T_1$ is the tree with no edges that we get from $T_0$ replacing the vertices $v_1$ and $v_2$ with one only vertex in homology class $\beta_{v_1} +\beta_{v_2}$.

As in \cite{OGW3}, we can prove that the following
\begin{proposition} \label{niceiso}
The natural map 
\begin{equation} \label{nicemap}
NMCH_0(\beta,S,Z,Q) \rightarrow MCH_0(\beta,S,Z,Q)
\end{equation}
is an isomorphism.
\end{proposition}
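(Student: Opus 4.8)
The plan is to mimic the proof of the corresponding statement in \cite{OGW3}. First I would check that the nice chains form a subcomplex of (\ref{boundary0}): a nice $0$-chain is $\hat\partial$-closed — on the product chains (\ref{nice-chains}) the operators $\partial$, $\delta$ and $\tilde\partial$ all vanish because the $\gamma_v$ are pairwise disjoint and supported over fixed rays of $I_S$ — while $\hat\partial$ of a nice $1$-chain (\ref{nice-chains1}) is again a nice $0$-chain, namely the one recorded in (\ref{relation}). This makes $NMCH_0$ well defined, with (\ref{nicemap}) the map induced by the inclusion of nice chains.

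The heart of the argument will be a contracting homotopy in the direction of the flag length $l$. For a fixed tree $T$, the flags $E_0\subset E_1\subset\cdots\subset E_l$ together with the forgetful face maps $\partial_i$ and the level-creating operator $\tilde\partial$ form the chain complex of a contractible object — a cone over the order complex of the subsets of $E(T)$, with $\tilde\partial$ recording the cone coordinate — so this part of $\hat{\mathcal{C}}_\bullet$ is acyclic above level $l=0$. From the corresponding contraction one extracts an operator $s$, raising the degree $d$ by one, lowering $l$, and compatible with $\mathrm{Aut}(T,m)$ and with the twisted coefficients $\mathfrak{o}_T$, satisfying on each stratum of fixed edge-number $k$
\begin{equation*}
\hat\partial s+s\hat\partial=\mathrm{id}-\Pi+R ,
\end{equation*}
where $\Pi$ projects onto the $l=0$ part and the error $R$ strictly decreases $k$ (it gathers the terms in which the current boundary $\partial$ meets an edge-contraction $\delta$). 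Since any $\beta\in\Gamma$ decomposes into elements of $\Gamma(S,Z,Q)$ in finitely many ways, $\mathfrak{T}_k(\beta,S,Z,Q)$ is finite and $k$ is bounded, so a descending induction on $k$ absorbs $R$ and upgrades $s$ to an honest chain homotopy between the identity of $\hat{\mathcal{C}}_\bullet(\beta,S,Z,Q)$ and a chain projection onto the nice subcomplex, which I again denote $\Pi$.

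Granted such a homotopy, the two assertions become formal. For surjectivity: a $0$-cycle $B$ satisfies $B=\Pi B+\hat\partial(sB)$, and $\Pi B$, being concentrated at $l=0$, is a nice chain and hence by the Lemma a combination of the product chains (\ref{nice-chains}); so $[B]$ has a nice representative. For injectivity: if a nice $0$-cycle $B$ equals $\hat\partial C$ with $C\in\hat{\mathcal{C}}_1$, then $sB=0$ because $B$ already lives at $l=0$, whence $C=\Pi C+\hat\partial(sC)$ and so $B=\hat\partial(\Pi C)$ with $\Pi C$ a nice $1$-chain, i.e.\ a combination of the isotopies (\ref{nice-chains1}); by (\ref{relation}) this exhibits $B$ as a boundary inside the nice complex, so $[B]=0$ in $NMCH_0$.

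The hard part will be the construction of $s$ and the bookkeeping of the induction on $k$: one has to check that inserting a flag level is compatible with every piece of structure built into $\mathcal{C}_d(T,m)$ — transversality to the big diagonals of the uncontracted edges, $\mathrm{Aut}(T,m)$-invariance, representability by twisted simplicial chains valued in $\mathfrak{o}_T$ — and, most importantly, that it preserves the forgetful-compatibility condition of Definition \ref{forget}, so that $s$ really maps $\hat{\mathcal{C}}_\bullet$ into itself; keeping the signs straight among the three terms of $\hat\partial=\partial-\delta-\tilde\partial$ is where this is most delicate. All of this runs parallel to \cite{OGW3}; the only genuinely new input is that Definition \ref{forget} forces an $l=0$ chain to be transverse to every big diagonal, which is precisely what pins nice $0$- and $1$-chains to the explicit product and isotopy forms (\ref{nice-chains}) and (\ref{nice-chains1}).
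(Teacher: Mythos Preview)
Your outline is in the same spirit as the paper's argument---both use a descending induction on the edge number $k$ together with the cone-like structure of the flag poset in the $l$-direction---but the paper is concrete where you are abstract, and your abstraction hides precisely the step that does the work.

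The paper does not package things into a single homotopy operator $s$ and a chain projection $\Pi$. For surjectivity it fixes $\overline{k}$ and, assuming $W_{k,l}=0$ for $k>\overline{k}$ and $l>0$, first \emph{builds the nice target by hand}: apply Definition~\ref{forget} with $E_0=E(T)$ to write $W_{\overline{k},0}(T,\{E(T)\})=\prod_{h}\gamma_{\pi(h)}$, then perturb the $\gamma_v$ to pairwise-disjoint curves $\tilde\gamma_v$ and set $W'_{\overline{k},0}(T)=\prod_h\tilde\gamma_{\pi(h)}$. The paper stresses that this perturbed product is automatically transverse to \emph{all} big diagonals because we are dealing with trees, a step that would fail for the graphs of \cite{OGW3}. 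Only after fixing this explicit nice $W'$ does the paper run the inner induction on $l$, defining $B_{\overline{k},l}(T,\{E_0,\dots,E_l\})$ as a small perturbation of $(-1)^{l+1}W_{\overline{k},l+1}(T,\{E_0,\dots,E_l,E(T)\})$---this is your cone move---with the perturbation needed to restore transversality along $E(T)\setminus E_l$.

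So your flag-cone picture matches the inner induction, but your $\Pi$ cannot simply be ``projection to $l=0$'': that is not a chain map, and an $l=0$ component of a forgetful-compatible chain need not be transverse to every diagonal. Your closing sentence that Definition~\ref{forget} \emph{forces} such transversality is not correct; forgetful compatibility only yields the product form $\prod_h\gamma_{\pi(h)}$, and the $\gamma_v$ may intersect. It is the explicit perturbation, together with the tree hypothesis, that produces a genuine nice cycle. That geometric step is the substance you are missing; once you insert it at the top of each $\overline{k}$, the rest of your sketch lines up with the paper's proof.
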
 
\begin{proof}
The proof is similar to the one of \cite{OGW3}. We first prove that (\ref{nicemap}) is surjective. 
Let $W \in \hat{\mathcal{C}}_{0}(\beta,S,Z;Q)$ be a multi-disk $0$-cycle, we need to show that there exists $B \in  \hat{\mathcal{C}}_{1}(\beta,S,Z,Q)$ such that $W'=W + (\hat{\partial } B)$ is a nice multi disk chain. We prove this by a double induction.

As in \cite{OGW3}, let $\overline{k}  \in \Z_{\geq 0}$ be a positive integer number, and assume that  
\begin{equation} \label{nice-induction0}
W_{k,l} = 0 \text{ if } k >\overline{k}  \text{ and }  l>0 .
\end{equation}
We want to prove that there exists $B$ such that $Z'_{\overline{k} ,l}=0$ if  $l > 0$. We will define $B$ such that $B_{k,l}=0$ if $k \neq \overline{k} $, in particular (\ref{nice-induction0}) still holds for $W'$ instead of $W$.

We first define $B_{\overline{k},0}$.  For $T \in \mathcal{T}_{\overline{k}}(\beta,S,Z,Q)$,  applying Definition \ref{forget} with $E_0 = E(T)$,  there exists curves $ \{   \gamma_{v}  \}_{v \in V(T)} $ such that 
$W_{\overline{k},0}(T, \{ E(T) \} )   = \prod_{h \in H(T)} \gamma_{\pi(h)}$.
Moreover the curves $ \{   \gamma_{v}  \}_{v \in V(T)} $ depends only by the tree $T'$ defined as in Definition \ref{forget}, that is a tree with $\overline{k}+1$ vertices and no edges.
For each $v \in V(T')$ fix a small perturbation $\tilde{\gamma}_v$ of $\gamma_v$, such that the support of the curves do not intersects pairwise.   
Define 
$$W'_{\overline{k},0}(T) =  \prod_{h \in H(T)} \tilde{\gamma}_{\pi(h)} . $$ 
The current $W'_{\overline{k},0}(T)$ is a small pertubation of $W_{\overline{k},0}(T, \{ E(T) \} )$ and, since we are working with trees,  is transverse along every big diagonals (this step would not be true in the case of graphs considered in \cite{OGW3}). From $ \hat{\partial}  W =0$ we know that  $ \partial W_{\overline{k},1}(T,\{ E_0, E(T) \})=  W_{\overline{k},0}(T,\{ E_0 \})- W_{\overline{k},0}(T,\{ E(T) \})$, so there exists $B_{\overline{k},0}(\{ E_0 \})$ small perturbation of $ W_{\overline{k},1}(T,\{E_0, E(T) \})$, transverse to all the big diagonals associated to the edges in $E(T) \setminus E_0$, such that the identity $\partial B_{\overline{k},0}(E_0)=  W_{\overline{k},0}(T,E_0) -  W'_{\overline{k},0}(T)$ holds.

Now we proceed by induction on $l$, as in \cite{OGW3}. Assume that we have defined $B_{\overline{k} ,l}$ for every $l < \overline{l} $ with the following proprieties: 
\begin{itemize}
\item  for every $(T, \{  E_0,E_1,...,E_l \}) \in \mathcal{T}_{\overline{k},l}$,
$B_{\overline{k},l}(T, \{  E_0,E_1,...,E_l \})$ is a small perturbation of $(-1)^{l+1} W_{\overline{k},l+1}(T, \{  E_0,E_1,...,E_l, E(T) \})$,
\item   for every $(T, \{  E_0,E_1,...,E_l \}) \in \mathcal{T}_{\overline{k},l}$,
$B_{\overline{k},l}(T, \{  E_0,E_1,...,E_l \})$ is transverse to the big diagonals associated to the edges $E(T ) \setminus E_l$,

\item The following identity holds for $0< l < \overline{l}$
\begin{equation} \label{nice-induction}
Z_{\overline{k},l} + \partial B_{\overline{k},l}  + \tilde{\partial} B_{\overline{k},l-1} =0 .
\end{equation}
\end{itemize}
The proof goes throughout as in \cite{OGW3}. 

Also the injectivity of  (\ref{nicemap}) can be proved as in \cite{OGW3}. 
\end{proof}

\subsection{Cup Product}

We now introduce two definitions of product in Multi-Disk-Homology with central charges.

For $\beta_1, \beta_2 \in \Gamma$, define 
$$\mathfrak{T}_{k, l}(\beta_1,\beta_2;S,Z,Q) = \sqcup_{k_1+k_2=k} \mathfrak{T}_{k_1, l}(\beta_1;S,Z,Q) \times \mathfrak{T}_{k_2, l}(\beta_2;S,Z,Q).$$
We define  $MCH(\beta_1,\beta_2;S,Z,Q)$ using  $\mathfrak{T}(\beta_1,\beta_2;S,Z,Q)$, in a way completely parallel to what we have done above for $MCH(\beta;S,Z,Q)$.
The union of trees
$$ ((T_1,\{ E_{i,1}  \}_{0 \leq i \leq l}) , (T_2, \{ E_{i,2}  \}_{0 \leq i \leq l})) \rightarrow  (T_1 \sqcup T_2,\{ E_{i,1} \sqcup E_{i,2} \}_{0 \leq i \leq l})$$
defines a map 
\begin{equation} \label{uniontree1}
 \mathfrak{T}(\beta_1,\beta_2,S,Z,Q) \rightarrow  \mathfrak{T}(\beta_1 + \beta_2,S,Z,Q).
\end{equation}

The map (\ref{uniontree1}) induces the homomorphism
\begin{equation} \label{restriction}
 \mathcal{F}_{\beta_1,\beta_2} :  MDH_0(\beta,S,Z,Q) \rightarrow  MDH_0(\beta_1,\beta_2,S,Z,Q).
\end{equation}

We now define the cup product in $MDH$:
$$ \cup : MDH_0(\beta_1,S,Z,Q)  \times MDH_0(\beta_2,S,Z,Q) \rightarrow  MDH_0(\beta_1,  \beta_2 ,S,Z,Q) .$$
For $W_1 \in MDH_0(\beta_1,S,Z,Q)$ and $W_2 \in MDH_0(\beta_2,S,Z,Q)$, $ W_1 \cup W_2 \in  MDH_0(\beta_1,  \beta_2 ,S,Z,Q)$ is given by the formula
\begin{equation} \label{cup-formula}
(W_1\cup W_2) ((T_1,\{ E_{i,1}  \}_{0 \leq i \leq l}) , (T_2, \{ E_{i,2}  \}_{0 \leq i \leq l}))= \sum_{0 \leq r \leq l}  W_1(T_1,  \{ E_{i,1}  \}_{0 \leq i \leq r}) \times  W_2(T_2, \{ E_{i,2}  \}_{r \leq i \leq l})   . 
\end{equation}
It is easy to check that 
$$  \hat{\partial}  (W_1\cup W_2) =  \hat{\partial} (W_1) \cup W_2  + W_1 \cup  \hat{\partial} (W_2)  $$
and therefore $\cup$ descends in homology.

Suppose now we have two disjoint strictly convex sectors $S_1 , S_2 \subset \R^2$ such that $S= S_1 \sqcup S_2 $ in clock-wise order. Define the set  
$$\mathfrak{T}_{k, l}(\beta, S_1,S_2,Z,Q) = \sqcup_{\beta_1 + \beta_2 = \beta} \sqcup_{ k_1+k_2=k} \mathfrak{T}_{k_1, l}(\beta_1,S_1,Z,Q) \times \mathfrak{T}_{k_2, l}(\beta_2, S_2,Z,Q)$$
and use it to define $MCH(\beta,S_1,S_2,Z,Q)$ as above.
Consider the map
\begin{equation} \label{uniontree2}
\mathfrak{T}_{k, l}(\beta, S_1,S_2,Z,Q) \rightarrow  \mathfrak{T}(\beta, S,Z,Q )
\end{equation}
as in   (\ref{uniontree1})
inducted by the union of trees. The associated homomorphism is denoted as
\begin{equation} \label{restriction2}
 \mathcal{F}_{S_1,S_2} :  MDH_0(\beta,S,Z,Q) \rightarrow  MDH_0(\beta, S_1,S_2,Z,Q).
\end{equation}

The second version of  cup product is:
$$ \cup : MDH_0(\beta_1,S_1,Z,Q)  \times MDH_0(\beta_2,S_2,Z,Q) \rightarrow  MDH_0(\beta_1 +\beta_2, S_1,S_2,Z,Q ) $$
where for $W_1 \in MDH_0(\beta_1,S_1,Z,Q)$ and $W_2 \in MDH_0(\beta_2,S_2,Z,Q)$, the definition of $ W_1 \cup W_2 \in MDH_0(\beta, S_1,S_2,Z,Q)$ is given by the same formula  as in (\ref{cup-formula}).

 



\section{Rational Invariants and Wall Crossing}

Endow $\Gamma$ with the skew-symmetric pairing $$\langle \bullet , \bullet \rangle : \Gamma \times \Gamma \rightarrow \Z$$ given by 
$$ \langle \beta_1 , \beta_2 \rangle = \langle(\partial \beta_1) , (\partial \beta_2)   \rangle $$
where in the right side we have used the intersection pairing on $H_1(L, \Z)$.

To the abelian group $\Gamma$ we associate the Lie algebra $\mathfrak{g}_{\Gamma(S,Z,Q)} = \oplus_{\beta \in \Gamma(S,Z,Q)} \Q e_{\beta}$ with Lie bracket
$$ [ e_{\beta_1} , e_{\beta_2} ] =  \langle \beta_1 , \beta_2 \rangle e_{\beta_1 + \beta_2} . $$
This is the same algebra introduced in \cite{KS} except for the sign. 

Let $U(\mathfrak{g}_{\Gamma(S,Z,Q)})$ be the enveloping algebra of the Lie algebra $\mathfrak{g}_{\Gamma(S,Z,Q)}$. We denote $U(\mathfrak{g}_{\Gamma(S,Z,QC})_{\beta}$ the vector subspace of $U(\mathfrak{g}_{\Gamma(S,Z,Q)})$ generated by the elements $e_{\beta_1} \otimes e_{\beta_2} \otimes .... \otimes e_{\beta_r}$ with $\sum_i \beta_i = \beta$.

\begin{proposition} \label{isomorphism}
There is a natural isomorphism 
$$ MDH_0 (\beta;S.Z,Q) \simeq  U(\mathfrak{g}_{\Gamma(S,Z,Q)})_{\beta}  $$
\end{proposition}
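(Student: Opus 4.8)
The plan is to construct the isomorphism on the nice chain level, using Proposition~\ref{niceiso} to reduce $MDH_0(\beta;S,Z,Q)$ to nice multi-disk homology $NMDH_0(\beta;S,Z,Q)$, and then to identify the latter with $U(\mathfrak{g}_{\Gamma(S,Z,Q)})_\beta$ combinatorially. First I would recall from the lemma preceding Proposition~\ref{niceiso} that a nice multi-disk $0$-cycle is a linear combination of the chains $W_{T_0,\{\gamma_v\}}$ of formula~(\ref{nice-chains}), which depend (up to the relation~(\ref{relation}) generated by isotopies where two curves cross) only on the tree $T_0$ with no edges together with the \emph{multiset} of topological charges $\{\beta_v\}_{v\in V(T_0)}$ attached to its vertices and the isotopy classes of the disjoint curves $\gamma_v$ with $[\gamma_v]=\partial\beta_v$. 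So on generators the assignment is $W_{T_0,\{\gamma_v\}}\mapsto e_{\beta_{v_1}}\otimes\cdots\otimes e_{\beta_{v_r}}$, where $v_1,\dots,v_r$ are the vertices of $T_0$; since $\sum\beta_{v_i}=\beta$ this lands in $U(\mathfrak{g}_{\Gamma(S,Z,Q)})_\beta$.

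The key steps, in order, are: (1) show this assignment is well defined as a map of vector spaces, i.e.\ that it kills the image of $\hat\partial$ on nice $1$-chains. By~(\ref{relation}) the boundary of an isotopy in which $\gamma_{v_1,t}$ crosses $\gamma_{v_2,t}$ equals $W_{T_1,\dots}$ with the two vertices $v_1,v_2$ merged into one vertex of charge $\beta_{v_1}+\beta_{v_2}$; but crucially a \emph{single} crossing produces a coefficient equal to the local intersection sign, so the net contribution of a full isotopy between two disjoint configurations is weighted by the change in algebraic intersection number $\langle\partial\beta_{v_1},\partial\beta_{v_2}\rangle = \langle\beta_{v_1},\beta_{v_2}\rangle$. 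Under the correspondence this says precisely that $e_{\beta_{v_1}}\otimes e_{\beta_{v_2}} - e_{\beta_{v_2}}\otimes e_{\beta_{v_1}}$ must be identified with $\langle\beta_{v_1},\beta_{v_2}\rangle e_{\beta_{v_1}+\beta_{v_2}}$, which is exactly the defining relation of the Lie bracket in $\mathfrak g_{\Gamma(S,Z,Q)}$ passed to $U(\mathfrak g_{\Gamma(S,Z,Q)})$. So the quotient of nice $0$-cycles by $\hat\partial(\text{nice }1\text{-chains})$ is exactly the quotient of the free vector space on ordered tuples $(\beta_1,\dots,\beta_r)$ by the relations $(\dots,\beta_i,\beta_{i+1},\dots)-(\dots,\beta_{i+1},\beta_i,\dots) = \langle\beta_i,\beta_{i+1}\rangle(\dots,\beta_i+\beta_{i+1},\dots)$, i.e.\ $U(\mathfrak g_{\Gamma(S,Z,Q)})_\beta$ by the PBW-type presentation of an enveloping algebra. (2) Check injectivity: two nice $0$-cycles mapping to the same element of $U(\mathfrak g)_\beta$ differ by a linear combination of the above relations, each of which is realized by an explicit isotopy $1$-chain, so they are homologous. (3) Check surjectivity: every $e_{\beta_1}\otimes\cdots\otimes e_{\beta_r}$ with $\sum\beta_i=\beta$ and each $\beta_i\in\Gamma(S,Z,Q)$ is hit, by choosing any disjoint representatives $\gamma_i$ of $\partial\beta_i$ (possible since the $\partial\beta_i$ can be represented by disjoint $1$-cycles on the surface $\Sigma$, as in the surjectivity argument of Proposition~\ref{niceiso}) — this is where finiteness of $\mathfrak T_k(\beta,S,Z,Q)$ guarantees the sum is finite. (4) Finally, verify naturality: that the map is compatible with the restriction homomorphisms $\mathcal F_{\beta_1,\beta_2}$ and $\mathcal F_{S_1,S_2}$ and the cup products of~(\ref{cup-formula}) on the $MDH$ side versus the coproduct/multiplication on the enveloping-algebra side — the union-of-trees map~(\ref{uniontree1}) corresponds to concatenation of tuples, i.e.\ to the product in $U(\mathfrak g_{\Gamma(S,Z,Q)})$.

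The main obstacle I expect is step~(1): pinning down that the coefficient produced by relation~(\ref{relation}) at a single transverse crossing is exactly $+1$ with the correct sign bookkeeping, so that an isotopy between two given disjoint configurations contributes the full integer $\langle\beta_{v_1},\beta_{v_2}\rangle$ (summing signed crossings) rather than just its parity or some other correction. This requires carefully tracking the twisted orientation coefficients $\mathfrak o_T$ on edges as one crossing creates an edge, and checking the sign convention in $\hat\partial = \partial - \delta - \tilde\partial$ matches the antisymmetry $\langle\beta_1,\beta_2\rangle = -\langle\beta_2,\beta_1\rangle$; this is precisely the point where the analysis diverges from the graph case of \cite{OGW3} and where the spin-structure/quadratic-refinement subtlety mentioned in the introduction would enter if one were not working with the sign-corrected algebra. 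Everything else is the standard combinatorics of the Poincaré--Birkhoff--Witt presentation, carried out as in \cite{OGW3}.
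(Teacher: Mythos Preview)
Your overall strategy is the same as the paper's: reduce to nice chains via Proposition~\ref{niceiso}, send a generator $W_{T_0,\{\gamma_v\}}$ to a monomial $e_{\beta_{v_1}}\otimes\cdots\otimes e_{\beta_{v_r}}$, and check that the relations coming from $\hat\partial$ of nice $1$-chains are exactly the PBW relations in $U(\mathfrak g_{\Gamma(S,Z,Q)})$. That much is correct.

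There is, however, a genuine gap in your construction of the map, and it is precisely the point the paper's proof is built around. The tensor product $e_{\beta_{v_1}}\otimes\cdots\otimes e_{\beta_{v_r}}$ is \emph{ordered}, while the vertices of $T_0$ are not; you never say how the ordering is chosen, and without that the assignment is not a map at all. The paper's device is to remember that the chains live on $L=\Sigma\times I_S$, not on $\Sigma$. One may choose the representative so that each $\gamma_v$ is supported in a single slice $\Sigma\times\{\theta_v\}$ with the $\theta_v$ pairwise distinct; the order of the $\theta_v$ in $I_S$ then \emph{is} the ordering of the tensor factors. With this in hand the crossing computation becomes transparent: an isotopy that swaps $\theta_{v_j}$ and $\theta_{v_{j+1}}$ while keeping the $\Sigma$-projections fixed produces a crossing in $\Sigma\times I_S$ exactly at each intersection point of the projections, and the signed count of these is $\langle\partial\beta_{v_j},\partial\beta_{v_{j+1}}\rangle=\langle\beta_{v_j},\beta_{v_{j+1}}\rangle$. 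That is why the commutator relation appears, and why the answer does not depend on the specific curves $\gamma_v$ but only on the charges $\beta_v$ and the order of the heights.

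Your step~(3) reveals the same confusion: you write that the $\partial\beta_i$ ``can be represented by disjoint $1$-cycles on the surface $\Sigma$'', but this is false whenever $\langle\partial\beta_i,\partial\beta_j\rangle\neq 0$. Disjointness is achieved only in the three-manifold $\Sigma\times I_S$, by separating the heights $\theta_v$; and it is exactly this extra $I_S$ direction that supplies both the ordering for the tensor monomial and the mechanism producing the integer $\langle\beta_{v_1},\beta_{v_2}\rangle$ when the heights are exchanged. Once you insert this ingredient, the rest of your outline (well-definedness, injectivity, surjectivity) goes through essentially as you wrote it, and coincides with the paper's argument.
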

\begin{proof}
First note that in formula  (\ref{nice-chains}) we can assume that the curves $\gamma_v$ have support on $\Sigma \times \{ \theta_v \}$ for some $\theta_v \in I_S$. Its image on $MDH_0$ depends only on $\beta_v$ and the ordering of $  \{ \theta_v \}_{ v \in V(T_0)}$. This subset of multi disks chains still generate $MDH_0$.  Moreover the set of relation that they satisfy in $MDH_0$ is generated by isotopies that exchange the order of  $  \{ t_v \}_{ v \in V(T_0)}$.

Define 
\begin{equation} \label{main-isomorphism}
W_{T_0, \{\gamma_v \}_{v \in V(T_0)}}  \longmapsto e_{\beta_{v_0}} \otimes e_{\beta_{v_1}} \otimes .... \otimes e_{\beta_{v_r}}  
\end{equation}
where $ \theta_{v_i} <  \theta_{v_j} $ if $i < j$. We want to prove that (\ref{main-isomorphism}) induces a map   $MDH_0 (\beta;S.Z,Q) \rightarrow  U(\mathfrak{g}_{\Gamma(S,Z,Q)})_{\beta} $ that is an isomorphism. In order to do this we need to check that the relation that the elements in satisfy in the two sides matches.
 
Suppose that we exchange the order of  $t_j$ and $t_{j+1}$. In the isotopy the number of cross (counted with sign) of the curves $\gamma_{v_j}$ and  $\gamma_{v_{j+1}}$ is given by $ \langle \gamma_j, \gamma_{j+1} \rangle$. Applying (\ref{relation}) for each cross we get that the jumping  of the left side of (\ref{main-isomorphism}) in $U(\mathfrak{g}_{\Gamma(S,Z,Q)})_{\beta}$ is mapped in  $\langle \gamma_j, \gamma_{j+1} \rangle  e_{\beta_{v_0}} \otimes... \otimes e_{\beta_{v_{j-1} } }\otimes e_{\beta_{v_j}  + \beta_{v_{j+1}}} \otimes e_{\beta_{v_{j+2}} }\otimes .... \otimes e_{\beta_{v_r}}$. This means that the relations on the two sides matches.

\end{proof}

\subsection{Rational Invariants}

Assume now that $Z$ is outside a wall of the first type. Recall from \cite{KS} that by definition, $Z$ is outside of \emph{wall of the first type}  if  
$$ Z(\beta_1) || Z(\beta_2)  \Longrightarrow   \beta_1 || \beta_2 ,$$ 
for each $ \beta_1 , \beta_2 \in \Gamma$. We now prove that in this case we can associate to each $W \in MCH_0(\beta;S,Z,Q)$ a rational number. 

To define the rational invariant as in \cite{OGW3}, we need a notion of linking number between pairs of curves on $L= \Sigma \times I_S$. In general a linking number  is an integer number $Link  ( \gamma_1 , \gamma_2 ) $ defined for each pair of not intersecting curves $\gamma_1,\gamma_2$ on  $L$ such that 
\begin{itemize}
\item $Link  ( \gamma_1 , \gamma_2 ) $ is invariant by deformations of $\gamma_1, \gamma_2$,
\item $Link  ( \gamma_1 , \gamma_2 ) $  jumps by $\pm 1$ if $\gamma_1$ and $\gamma_2$ cross transversely according to the sign of the cross.
\end{itemize}
In the context of this paper we need a slight refinement of this notion. Instead of considering curves on $L$ we need to consider pairs $(\gamma, \beta)$ where $\gamma$ is a curve on $L$ and $\beta \in \Gamma$ with $[\gamma]= \partial \beta$. We are then interested to define for each pair $ (\gamma_1, \beta_1) , (\gamma_2, \beta_2) $
\begin{equation} \label{link}
Link  ( (\gamma_1, \beta_1) , (\gamma_2, \beta_2) ) \in \Z.
\end{equation}
as an integer number that jumps by $\pm 1$ if $\gamma_1$ and $\gamma_2$ cross transversely according to the sign of the cross.
\begin{lemma}
There exists a unique definition of (\ref{link}) such that
$$\text{Arg}  (Z(\beta_1)) < \text{Arg}(Z(\beta_2)), \theta_1 < \theta_2   \Longrightarrow   Link  ( (\gamma_1, \beta_1) , (\gamma_2, \beta_2) )=0$$
if the support of $\gamma_i$ is in  $\Sigma \times \{ \theta_i \}$.
\end{lemma}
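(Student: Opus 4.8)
The plan is to construct the linking number by specifying its value on a convenient reference configuration and then extending by the jump rule, checking that the extension is consistent. First I would observe that the normalizing condition pins down $Link((\gamma_1,\beta_1),(\gamma_2,\beta_2))$ whenever $\gamma_i$ is supported on $\Sigma \times \{\theta_i\}$ with the two slices disjoint and $\mathrm{Arg}(Z(\beta_1)) < \mathrm{Arg}(Z(\beta_2))$: it must be $0$ there. Any two disjoint curves $\gamma_1,\gamma_2$ on $L = \Sigma \times I_S$ can be connected by an isotopy to such a "separated-slice" configuration: push the support of $\gamma_1$ near a slice $\Sigma \times \{\theta_1\}$ and $\gamma_2$ near a slice $\Sigma \times \{\theta_2\}$ with $\theta_1 < \theta_2$ chosen so that the $\mathrm{Arg}(Z)$ inequality holds (this uses that $Z$ is outside a wall of the first type, so $Z(\beta_1)$ and $Z(\beta_2)$ are either parallel or have well-separated arguments, and in the parallel case $\beta_1 \parallel \beta_2$, which I handle separately below). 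Define $Link((\gamma_1,\beta_1),(\gamma_2,\beta_2))$ to be the signed count of transverse crossings accumulated along such an isotopy, starting from value $0$ at the separated-slice end. The jump-by-$\pm 1$ property is then built in, and invariance under deformation of $\gamma_1,\gamma_2$ that avoids crossings is immediate.

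The main thing to verify is well-definedness: the count must not depend on the chosen path to the reference configuration. Two such paths differ by a loop in the space of pairs of curves on $L$, and the total signed crossing number around any loop must vanish. Here is where the refinement by $\beta$ matters: the naive crossing number around a loop need not vanish because $\Sigma$ has nontrivial $H_1$, so two curves can "orbit" each other homologically; but the class data $[\gamma_i] = \partial\beta_i$ together with the linear structure of $\Gamma$ controls exactly this ambiguity. Concretely, I would compute the holonomy of the crossing-count cocycle over the generators of $\pi_1$ of the configuration space — moving one curve around a loop in $\Sigma$, and moving it "up" through $I_S$ and back — and show that the normalization forces the holonomy to be consistent. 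The factor that could obstruct this is the self-intersection/framing ambiguity, which is precisely why one works with $\Sigma \times I_S$ rather than a closed threefold: the extra $I_S$ direction gives room to separate the slices, and the sector condition guarantees a canonical "order at infinity."

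For the parallel case, when $\beta_1 \parallel \beta_2$ we cannot necessarily separate by argument of $Z$; instead I would note that then $Z(\beta_1), Z(\beta_2)$ lie on a common ray $\ell \in I_S$, and I reduce to a one-variable problem on $\Sigma \times \{\ell\} \times (\text{small interval})$, defining the linking number by the usual signed intersection with a Seifert-type chain for $\gamma_1$ in $\Sigma \times I_S$ whose boundary is $\gamma_1$ pushed slightly off its slice; existence of such a chain uses $[\gamma_1] = \partial\beta_1$ and the fact that a class of the form $\partial\beta$ bounds in $\Sigma \times I_S$ (it is a cylinder $\gamma_1 \times [\theta_1,\theta_1']$). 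The linking number is then the algebraic intersection of $\gamma_2$ with this cylinder, which visibly jumps by $\pm 1$ under transverse crossings and agrees with $0$ in the separated configuration.

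Finally, uniqueness: any two definitions satisfying the jump rule differ by a function that is locally constant on the space of disjoint pairs $((\gamma_1,\beta_1),(\gamma_2,\beta_2))$; since every such pair connects to a separated-slice reference configuration where both definitions are forced to equal $0$, the two definitions coincide everywhere. The step I expect to be the real obstacle is the holonomy computation showing the crossing cocycle is exact — i.e., that there is genuinely no monodromy obstruction once the homology classes $\beta_i$ are fixed; everything else is bookkeeping with signs and the sector order.
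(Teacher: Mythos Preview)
Your overall strategy---set $Link=0$ on a family of reference configurations, extend by the signed crossing count, and check that the extension is path-independent---is exactly the right framework, and it is what the paper implicitly does. But you misidentify the mechanism that makes the holonomy vanish, and the argument you give in its place does not work.

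The monodromy of the crossing cocycle around a loop in the space of disjoint pairs is computed as follows: hold $\gamma_2$ fixed and let $\gamma_1$ trace a loop; then $\gamma_1$ sweeps out a $2$-cycle $S$ in $L=\Sigma\times I_S$, and the total signed crossing is the intersection number $[S]\cdot[\gamma_2]$ in $L$. The paper's one-line proof is that this intersection pairing $H_2(L,\Z)\times H_1(L,\Z)\to\Z$ vanishes: if $\Sigma$ is non-compact then $H_2(L)=0$, and if $\Sigma$ is compact then $H_2(L)$ is generated by a slice $[\Sigma\times\{\theta\}]$, which is disjoint from any curve pushed to a different slice. This is a fact about the topology of $\Sigma\times(\text{interval})$ and has nothing to do with the extra data $\beta_i$. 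Your claim that ``the class data $[\gamma_i]=\partial\beta_i$ together with the linear structure of $\Gamma$ controls exactly this ambiguity'' is therefore a misattribution: the $\beta$-refinement plays no role in killing the monodromy. Its role is elsewhere---it selects, via $\text{Arg}(Z(\beta_i))$, which of the two slice-orderings is declared the reference with $Link=0$, so that the normalization is not overdetermined when $\langle\partial\beta_1,\partial\beta_2\rangle\neq0$.

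Your treatment of the parallel case is also off. The cylinder $\gamma_1\times[\theta_1,\theta_1']$ has boundary $\gamma_1\times\{\theta_1'\}-\gamma_1\times\{\theta_1\}$, not $\gamma_1$, so it is not a Seifert chain for $\gamma_1$ and the intersection with $\gamma_2$ does not compute a linking number in the sense you want. In fact no separate argument is needed: once you know the intersection pairing vanishes, the parallel case follows along with everything else (and in that case $\langle\partial\beta_1,\partial\beta_2\rangle=0$, so the crossing count is identically zero and the ambiguity in normalization is harmless).

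In short: the missing ingredient is precisely the observation that $H_2(L)\cdot H_1(L)=0$ for $L=\Sigma\times I_S$. Once you have that, everything you wrote about existence and uniqueness goes through; without it, the ``holonomy computation'' you flag as the obstacle remains undone.
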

The lemma follows easily from the fact that the intersection pairing between $H_2(L, \Z)$ and $H_1(L, \Z)$ vanishes (if $\Sigma$ is non-compact we actually have $H_2(L)=0$). An analogous fact was used in \cite{frame}.

We can now apply the definion of \cite{OGW3} to get the multi-link homorphim
\begin{equation} \label{rational}
MultiLink: MDH_0(\beta,S,Z,Q) \rightarrow H_0(point) = \Q.
\end{equation} 
On the nice chains this is defined as 
\begin{equation} \label{multi-link}
MultiLink(W_{T_0, \{\gamma_v \}_{v \in V(T_0)}}(T)) = \frac{1}{k!}  \prod_{ \{h_1, h_2\}  \in E(T)} Link((\gamma_{\pi(h_1)}, \beta_{\pi(h_1)}) , (\gamma_{\pi(h_2)},  \beta_{\pi(h_2)} )) .
\end{equation}

\subsection{String Stability Structure}

We now introduce the analogous of the Stability Structures defined in Section $2$ of \cite{KS}.

Let $Spin(\Sigma)$ the set of spin structures of $\Sigma$. Recall that there is an obvious action of $H^1(\Sigma, \Z_2)$ on $Spin(\Sigma)$.
\begin{definition}
A string stability structure consist of
\begin{itemize}
\item A quadratic form $Q$ on $\Gamma_{\R}$,
\item An homomorphism of abelian groups  $Z: \Gamma \rightarrow \R^2$ such that $Q |_{\text{Ker}(Z)}< 0$
\item For each strictly convex sector $S$ on $\R^2$, $\beta \in \Gamma$, $\sigma \in Spin(\Sigma)$ an element $W^{\sigma}_{\beta,S} \in MDH(\beta;S,Z,Q)$
\end{itemize} 
satisfying the following proprieties
\begin{itemize}
\item For each $\epsilon \in H^1(\Sigma, \Z_2)$
$$ W^{\epsilon \sigma}_{\beta,S} = \epsilon (\partial \beta) W^{ \sigma}_{\beta,S} $$
\item For every $\beta_1, \beta_2,...,\beta_r \in \Gamma$ with $\beta_1+\beta_2+...+\beta_r= \beta$ 
\begin{equation} \label{factorization1}
 \mathcal{F}_{\beta_1,\beta_2,...,\beta_r}(W^{\sigma}_{\beta,S}) = [W^{\sigma}_{\beta_1,S} \cup W^{\sigma}_{\beta_2,S} \cup ... \cup W^{\sigma}_{\beta_r,S}] / \text{Aut} \{ \beta_1,\beta_2,...,\beta_r  \} .
\end{equation}
\item For two disjoint sectors $S_1 , S_2 \subset\R^2$ such that $S= S_1 \sqcup S_2 $ in clock-wise order
\begin{equation} \label{factorization2} 
\mathcal{F}_{S_1,S_2}(W^{\sigma}_{\beta,S}) =  \sum_{\beta_1 + \beta_2 = \beta} [ W^{\sigma}_{\beta_1,S_1}\cup W^{\sigma}_{\beta_2,S_2}] . 
\end{equation}
\end{itemize}
\end{definition}

For each $\beta,S, \sigma$ define
$$ a^{\sigma}_{\beta,S} = MultiLink(W^{\sigma}_{\beta,S}) $$
using (\ref{rational}).


Denote by $A^{\sigma}_{\beta,S}$ the image of $W^{\sigma}_{\beta,S}$ in $U(\mathfrak{g}_{S,Z,Q})_{\beta}$ given by Proposition \ref{isomorphism}. 
Define 
$$ A^{\sigma}_S =  \sum_{\beta} A^{\sigma}_{\beta,S}   \in  \prod_{\beta \in \Gamma}  U(\mathfrak{g}_{S,Z,Q})_{\beta} .$$
The right side can be a infinite sum, so does not belong to $U(\mathfrak{g}_{\Gamma(S,Z,Q)}) $. We consider it as a formal sum on the infinite direct product $ \prod_{\beta \in \Gamma}  U(\mathfrak{g}_{S,Z,Q})_{\beta}$. Observe that this is still an algebra since any element of $\Gamma(S,Z,Q)$ can be written as the sum of other elements of  $\Gamma(S,Z,Q)$ in a finite number of way. 

\begin{proposition}
If $Z$ is outside of a wall of first type, $A^{\sigma}_S$ is given by the following formula 
\begin{equation} \label{WCF}
A^{\sigma}_S = \prod_{l \subset S }^{\rightarrow} \text{exp} ( \sum_{Z(\beta) \in l}  a^{\sigma}_{\beta,S} e_{\beta}  ) .
\end{equation} 
\end{proposition}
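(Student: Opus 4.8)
The plan is to compute $A^{\sigma}_S$ by exploiting the factorization property of the string stability structure together with the identification $MDH_0(\beta;S,Z,Q)\simeq U(\mathfrak{g}_{\Gamma(S,Z,Q)})_\beta$ of Proposition \ref{isomorphism}. First I would recall that under this isomorphism a nice chain $W_{T_0,\{\gamma_v\}}$ supported on slices $\Sigma\times\{\theta_v\}$ maps to the ordered product $e_{\beta_{v_0}}\otimes\cdots\otimes e_{\beta_{v_r}}$ with $\theta_{v_i}$ increasing, i.e.\ ordered by $\mathrm{Arg}(Z(\beta_{v_i}))$ when $Z$ is outside a wall of the first type (since then distinct rays $l\subset S$ carry charges whose arguments are totally ordered). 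So the target algebra $\prod_\beta U(\mathfrak{g}_{S,Z,Q})_\beta$ is naturally filtered by the decomposition of $S$ into rays, and the ordered product $\prod^{\rightarrow}_{l\subset S}$ in \eqref{WCF} is exactly the product in this algebra respecting that order.

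Next I would proceed in two stages. Stage one: fix a single ray $l\subset S$ and show that the ``ray factor'' $\sum_{Z(\beta)\in l} A^{\sigma}_{\beta,S}$, assembled from all charges proportional to a fixed primitive $\gamma_l$, equals $\exp\!\big(\sum_{Z(\beta)\in l} a^{\sigma}_{\beta,S} e_\beta\big)$. This is where the multilinking homomorphism \eqref{multi-link} and the factorization \eqref{factorization1} enter: for charges all lying on one ray, any two boundary curves $\gamma_{v},\gamma_{v'}$ can be taken on the \emph{same} slice $\Sigma\times\{\theta\}$ with $[\gamma_v]=\partial\beta_v$, $[\gamma_{v'}]=\partial\beta_{v'}$ parallel in $H_1(L,\Z)$, hence $Link$ between them is determined by the convention of the Lemma and in fact the generators $e_\beta$ for $Z(\beta)\in l$ all commute in $U(\mathfrak g)$ (their Lie bracket involves $\langle\beta,\beta'\rangle=0$ for parallel charges). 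Then \eqref{factorization1} says $\mathcal F_{\beta_1,\dots,\beta_r}(W^\sigma_\beta)$ is the symmetrized cup product of the $W^\sigma_{\beta_i}$, and $MultiLink$ turns the $1/k!$ and the product of linking numbers in \eqref{multi-link} precisely into the coefficients of $\exp$: one checks that $a^\sigma_{n\gamma_l,S}$-type contributions combine so that $\sum_n A^\sigma_{n\gamma_l} = \exp(\sum_n a^\sigma_{n\gamma_l}e_{n\gamma_l})$. This is essentially the statement that $MultiLink$ is a ring homomorphism sending $W$ to its ``linking exponential'', combined with the fact that for a single ray the combinatorics of trees is that of set partitions.

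Stage two: glue the rays. Here I would use \eqref{factorization2} repeatedly: writing $S=S_1\sqcup\cdots\sqcup S_N$ in clockwise order into thin sectors each containing at most one ray carrying nontrivial $W^\sigma$, iterated application of $\mathcal F_{S_1,S_2}$ gives $\mathcal F_{S_1,\dots,S_N}(W^\sigma_{\beta,S}) = \sum_{\beta_1+\cdots+\beta_N=\beta}[W^\sigma_{\beta_1,S_1}\cup\cdots\cup W^\sigma_{\beta_N,S_N}]$, and under the isomorphism of Proposition \ref{isomorphism} the cup product \eqref{cup-formula} becomes the (ordered, associative) product in $\prod_\beta U(\mathfrak g_{S,Z,Q})_\beta$ that respects the clockwise order of the $S_i$. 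Summing over $\beta$ and using stage one for each $S_i$ yields $A^\sigma_S = \prod^{\rightarrow}_{l\subset S}\exp(\sum_{Z(\beta)\in l}a^\sigma_{\beta,S}e_\beta)$. The passage to infinitely many rays is legitimate because, as already noted in the excerpt, each $\beta\in\Gamma(S,Z,Q)$ decomposes in only finitely many ways, so the infinite product makes sense coefficient-by-coefficient in $\prod_\beta U(\mathfrak g_{S,Z,Q})_\beta$.

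The main obstacle I expect is stage one: proving rigorously that $MultiLink$ intertwines the cup product with the product of the target algebra and, in particular, that the combinatorial factor $\frac{1}{k!}\prod_{e\in E(T)}Link$ in \eqref{multi-link} summed over all trees with fixed total charge on one ray reproduces exactly the exponential series. Concretely one must show that the generating series $\sum_\beta MultiLink(W^\sigma_\beta)$ over charges on a ray is the exponential of its ``connected part'' $\sum_\beta a^\sigma_\beta e_\beta$; this is an exponential-formula/tree-versus-forest argument, and care is needed with the automorphism factors $\mathrm{Aut}\{\beta_1,\dots,\beta_r\}$ in \eqref{factorization1} and with the fact that on a single ray the linking numbers are forced to be zero by the Lemma's convention, so that the only surviving contributions come from the normalization constants and the tree combinatorics. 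Once that identity is in hand, stage two is a formal bookkeeping of the clockwise order, and the Proposition follows.
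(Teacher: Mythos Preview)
Your approach is correct but takes a longer route than the paper's. The paper does not split $S$ into rays and never invokes the sector factorization \eqref{factorization2} for this Proposition; instead it computes directly the coefficient of every clockwise-ordered monomial $e_{\beta_1}\otimes\cdots\otimes e_{\beta_r}$ in $A^\sigma_S$ using only \eqref{factorization1}. The single observation that does all the work is that once the nice-chain representative is chosen with $\gamma_v$ supported on slices $\Sigma\times\{\theta_v\}$ with the $\theta_v$ ordered according to $\mathrm{Arg}(Z(\beta_v))$, \emph{every} linking number in \eqref{multi-link} vanishes by the Lemma's normalization, so $a^\sigma_{\beta,S}$ is literally the coefficient of $e_\beta$ in $A^\sigma_{\beta,S}$; then \eqref{factorization1} forces the coefficient of $e_{\beta_1}\otimes\cdots\otimes e_{\beta_r}$ to be $\frac{1}{|\mathrm{Aut}\{\beta_1,\dots,\beta_r\}|}\prod_i a^\sigma_{\beta_i,S}$, which is exactly the coefficient one reads off from the ordered product of exponentials on the right-hand side.

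Your two-stage plan (single-ray exponential via \eqref{factorization1}, then clockwise gluing via \eqref{factorization2}) is valid and makes the group-like structure of $A^\sigma_S$ more explicit, but it introduces the extra burden you yourself flag: on a single ray the strict-inequality hypothesis of the Lemma does not literally apply, so you must separately argue the vanishing of those linking numbers (e.g.\ via $\langle\beta,\beta'\rangle=0$ for proportional charges). The paper sidesteps this entirely by never isolating a single ray. Note also that the paper uses \eqref{factorization2} only \emph{after} this Proposition, to deduce the identity \eqref{KS-factotization}; you are front-loading an ingredient that is not needed here.
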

Here we are using the same notation of \cite{KS}. In (\ref{WCF}) the product is taken in the clockwise order over the set of all rays $l$ which belong to the sector $S$. 
Only a finite numbers of rays contribute to the component on $ U(\mathfrak{g}_{\Gamma(S,Z,Q)})_{\beta}$.
\begin{proof}
As in Proposition \ref{isomorphism}), we can consider generators of $MDH_0$ given by formula(\ref{nice-chains}) such that the curves $\gamma_v$ have support on $\Sigma \times \{ \theta_v \}$ for some $\theta_v \in I_S$.  Moreover we can assume that $ \{Z(\beta_1), Z(\beta_2),...,Z(\beta_r)  \} $ are in clock-wise order.
In this case all the linking numbers involved in the definition (\ref{rational}) are zero, so the coefficient of $e_{\beta}$ in $A^{\sigma}_S$ is given by $a^{\sigma}_{\beta,S}$ by definition. 

The factorization propriety (\ref{factorization1}) implies that the coefficient of $e_{\beta_1} \otimes e_{\beta_2} \otimes .... \otimes e_{\beta_r}$ in $A^{\sigma}_S$ is given by 
$$\frac{1}{|\text{Aut} \{ \beta_1,\beta_2,...,\beta_r  \}|}\prod_i a^{\sigma}_{\beta_iS} .$$ 
This match the coefficient on the right hand side of (\ref{WCF}). 
\end{proof}

From (\ref{factorization2}) it follows  directly that 
\begin{equation} \label{KS-factotization}
 A^{\sigma}_S =  A^{\sigma}_{S_1} A^{\sigma}_{S_2}
\end{equation}
for any two disjoint sectors $S_1 , S_2 \subset\R^2$ such that $S= S_1 \sqcup S_2 $ in clock-wise order. In particular (\ref{KS-factotization}) and (\ref{WCF}) imply that the rational numbers $a^{\sigma}_{\beta,S}$ do not depend on the sector $S$. We denote this rational number as $a^{\sigma}_{\beta}$.

\subsection{Quadratic Refinements}
Recall that a quadratic refinement of the intersection pairing of $H_1(\Sigma,\Z)$ is a map $\sigma: H_1(\Sigma, \Z) \rightarrow \Z_2$ such that 
$$ \sigma (\gamma_1) \sigma (\gamma_2) = (-1)^{ \langle \gamma_1 , \gamma_2 \rangle  }  \sigma (\gamma_1+ \gamma_2)  .$$
There is transitive action of $H^1(\Sigma, \Z_2)$ on the set of quadratic refinement given by
$$ \sigma(\gamma) \rightarrow (-1)^{\epsilon(\gamma)} \sigma(\gamma)$$
for $\epsilon \in H^1(\Sigma, \Z_2)$.

It is a classical result of Atiyah that there exists a one-to-one correspondence between spin structures on $\Sigma$ and quadratic refinement
$$ \text{ spin structures on }   \Sigma \longleftrightarrow \text{ quadratic refinement of  }   (-1)^{ \langle \cdot , \cdot \rangle}  $$
that is compatible with the action of $H^1(\Sigma, \Z_2)$ on both the sides.

Define the algebra
$\hat{\mathfrak{g}}_{\Gamma(S,Z,Q)} = \oplus_{\beta \in \Gamma(S,Z,Q)} \Q \hat{e}_{\beta}$ with Lie bracket
$$ [ \hat{e}_{\beta_1} , \hat{e}_{\beta_2} ] =  (-1)^{ \langle \beta_1 , \beta_2 \rangle}   \langle \beta_1 , \beta_2 \rangle \hat{e}_{\beta_1 + \beta_2} . $$
This is the algebra introduced in \cite{KS}.

Each $\sigma \in Spin(\Sigma)$ defines an isomorphism of algebras
\begin{equation} \label{sigma-iso}
 \mathfrak{g}_{\Gamma(S,Z,Q)} \rightarrow \hat{\mathfrak{g}}_{\Gamma(S,Z,Q)} 
\end{equation}
given by $ {e}_{\beta} \rightarrow \sigma(\partial \beta) \hat{e}_{\beta}  $. 

The rational number
$$ a_{\beta} = \sigma(\partial \beta)  a^{\sigma}_{\beta} $$
does not depend on the spin structure $\sigma$. 
The image  $\hat{A}_S$  of $ A^{\sigma}_S $ by the isomorphism (\ref{sigma-iso}) does not depend on $\sigma$ and is given by
\begin{equation} \label{WCF2}
\hat{A}_S = \prod_{l \subset S }^{\rightarrow} \text{exp} ( \sum_{Z(\beta) \in l} a_{\beta} \hat{e}_{\beta}  ) .
\end{equation}

\subsection{Variation of String Stability Structure}

Let $\beta \in \Gamma$ and $S \subset \R^2$ a strictly convex sector. We say that a central charge $Z$ is on a $(S, \beta)$ wall of second type if exist $\beta_1, \beta_2 \in \Gamma(S,Z,Q)$ with  $Z(\beta_1) \in \partial S$, such that $\beta = \beta_1 + \beta_2$.

A variation of String Stability Structures is one parameter family of string stability structures $(Z_t , \{ W^{\sigma}_{\beta,S,t} \}_{\beta,S }  )_{0 \leq t \leq 1} $ such that $W^{\sigma}_{\beta,S,t }$ does not depend on $t \in [0,1]$ if $Z_t$ is not $(S, \beta)$ wall of second type for each $t \in [0,1]$.


\begin{proposition}
The component in $ U(\mathfrak{g}_{\Gamma(S,Z,Q)})_{\beta}$ of the element (\ref{WCF2}) is invariant  in a variation of String Stability Structure such that $(Z_t  )_{0 \leq t \leq 1}$ does not hit a $\beta$-wall of second type.
\end{proposition}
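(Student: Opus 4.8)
The plan is to reduce the invariance statement to the two structural properties already established: the factorization property \eqref{factorization2} of a string stability structure (which yields \eqref{KS-factotization}), and the fact that $W \in MDH_0(\beta;S,Z,Q)$ is itself invariant under deformations that do not cross a wall of the second type (this is the statement recalled from \cite{OGW3} in the introduction, namely that $W$ is a well-defined element of $MDH_0$ away from walls of the second type). So the first step is to fix $\beta$ and a strictly convex sector $S$, and to take a path $(Z_t)_{0 \le t \le 1}$ as in the hypothesis. We must show that the $U(\mathfrak{g}_{\Gamma(S,Z,Q)})_\beta$-component of $\hat{A}_S$, equivalently of $A^\sigma_S$ (the two being related by the $\sigma$-twist \eqref{sigma-iso}, which is $t$-independent), does not change along the path.

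The key step is a subdivision argument. For the component in $U(\mathfrak g_{\Gamma})_\beta$ only finitely many rays in $S$ matter, and only the finitely many $\beta' \in \Gamma(S,Z_t,Q)$ with $\beta' \le \beta$ (as a sum decomposition) contribute. First I would handle the generic case: if along the whole path $Z_t$ is never on an $(S,\beta)$-wall of the second type, then by definition of a variation of string stability structures each $W^\sigma_{\beta',S,t}$ with $\beta'\le\beta$ is literally $t$-independent, hence so is $A^\sigma_{\beta,S}$ and its image $\hat A_S$-component; nothing to prove. The real content is when the path crosses such a wall at some $t_0$: there $Z_{t_0}(\beta_1)\in\partial S$ for some decomposition $\beta=\beta_1+\beta_2$. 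Here I would use \eqref{KS-factotization}: choose the subdivision $S = S_1 \sqcup S_2$ (clockwise) with the common boundary ray $\partial S_1 \cap \partial S_2$ equal to, or adjacent to, the ray through $Z_{t_0}(\beta_1)$, chosen so that for $t$ near $t_0$ on either side no charge $\beta'\le\beta$ has $Z_t(\beta')$ on $\partial S_1$, $\partial S_2$, or the new interior boundary. Then $A^\sigma_S = A^\sigma_{S_1}A^\sigma_{S_2}$, and on each of the smaller sectors the path is (locally near $t_0$) not crossing a wall of the second type \emph{for that sector and the relevant charges}, because the offending charge $\beta_1$ now sits in the interior of $S_1$ (or $S_2$) rather than on its boundary. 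Hence $A^\sigma_{S_1}$ and $A^\sigma_{S_2}$ are each locally constant in $t$ near $t_0$, so their product, the $\beta$-component of $A^\sigma_S$, is continuous — hence constant — across $t_0$. Covering $[0,1]$ by finitely many such intervals finishes the argument, and the passage to $\hat A_S$ via \eqref{sigma-iso} is immediate since that isomorphism does not involve $Z$ or $t$.

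The main obstacle I anticipate is the bookkeeping at a wall crossing where several decompositions $\beta=\beta_1+\beta_2$ become marginal simultaneously, or where $Z_{t_0}(\beta_1)$ lands exactly on $\partial S$ itself (not in its interior): in the latter case one cannot simply ``push'' the charge into a subsector's interior, and one must instead enlarge $S$ slightly to a sector $S' \supset S$, argue invariance of the $\beta$-component there, and then restrict back via \eqref{restriction2} — using that for the $\beta$-component the extra rays of $S'\setminus S$ contribute trivially when they carry no charge $\le\beta$. A second, more minor, point is to check that the finiteness used throughout — finitely many contributing rays and decompositions — is uniform in $t$ on a neighborhood of $t_0$, which follows because $\Gamma(S,Z_t,Q)$ varies ``semicontinuously'': the quadratic form condition $Q|_{\mathrm{Ker}(Z_t)}<0$ and $Q(\beta')\ge 0$ confine the relevant $\beta'$ to a compact region of $\Gamma_\R$, hence to a finite subset of the lattice, locally constant in $t$ away from the discrete set of walls. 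Once these two points are dispatched, the argument is just the telescoping subdivision plus local constancy, so no heavy computation is required.
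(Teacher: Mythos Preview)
You have over-read the hypothesis. The proposition assumes that $(Z_t)_{0\le t\le 1}$ never hits an $(S,\beta)$-wall of the second type; there is no ``real content'' case in which the path crosses such a wall at some $t_0$. What you call the generic case \emph{is} the entire statement, and your one-sentence treatment of it is exactly the paper's proof: by the definition of a variation of string stability structures, $W^{\sigma}_{\beta,S,t}$ is literally $t$-independent under this hypothesis, hence so is its image $A^{\sigma}_{\beta,S,t}$ under the isomorphism of Proposition~\ref{isomorphism}, and therefore so is the $\beta$-component of $\hat A_S$ via the $\sigma$-twist \eqref{sigma-iso}. Note also that you only need invariance of $W^{\sigma}_{\beta,S,t}$ itself, not of all $W^{\sigma}_{\beta',S,t}$ with $\beta'\le\beta$: the $\beta$-component of $A^{\sigma}_S$ is by definition the image of the single class $W^{\sigma}_{\beta,S}$.

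Your subdivision argument using \eqref{KS-factotization}, and the discussion of enlarging $S$ when a charge lands on $\partial S$, are aimed at precisely the situation the hypothesis rules out (some $\beta_1$ with $Z_{t_0}(\beta_1)\in\partial S$ and $\beta=\beta_1+\beta_2$). That machinery would be relevant if one wanted to compare $A^{\sigma}_S$ for different sectors, or to analyze what actually happens at a wall of the second type, but it plays no role in this proposition. So nothing you wrote is wrong, but the second and third paragraphs address a case that does not occur, and the proof collapses to the tautology you already identified.
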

\begin{proof}
$ A^{\sigma}_{\beta,S,t}$ is invariant since $ W^{\sigma}_{\beta,S,t}$ is invariant. 
\end{proof}



\section{Geometric Realization}

Let $(X,\Omega)$ be a complex symplectic manifold of complex dimensions two. Let $\Sigma$ be a complex-lagrangian submanifold of $X$. The pair $(X, \Sigma)$ is not necessarily compact. In the non compact case we assume suitable convexity propriety at infinity which ensure compactness of the moduli space of curves. In this section we associate to the pair  $(X,\Sigma)$ a string stability structure.

Fix a quasi-hyperkahler structure $(\omega_1,J_1),(\omega_2,J_2),  (\omega_3,J_3)$ with $\Omega= \omega_1 +i \omega_2$. The differential form $\omega_3$ is not necessary closed.

To the pair $(X,\Sigma)$  we associate:
\begin{itemize}
\item The surface $\Sigma$ as above 
\item Topological charges $\Gamma = H_2(X, \Sigma, \Z)$. 
\item $\partial : \Gamma \rightarrow H_1( \Sigma, \Z) $
is the usual boundary map in homology.
\end{itemize}

Observe that the data above do are invariant (in a obvious sense) by the isotopies of $\Sigma $ inside $X$. To define an element of $MDH$ the central charge is given by
$$ Z(\beta) = \int_{\beta} \Omega . $$

For $\theta \in \R / (2 \pi \Z) $, let $J_{\theta}= \text{cos}(\theta) J_1  + \text{sen}(\theta) J_2  $ be the almost-complex structure compatible with the symplectic structure $\text{Re} (e^{- i \theta} \Omega)$.
For $\beta \in H_2(X, \Sigma, \Z)$, let $\overline{\mathcal{M}}_k (\beta, J_{\theta})$ be the moduli space of $J_{\theta}$-psudoholomphic disks in the relative homology class $\beta$, with boundary mapped on $\Sigma$ and $k$ boundary punctures. It is a standard fact (easy to prove) that   
$$\overline{\mathcal{M}}_k (\beta, J_{\theta}) \neq \emptyset \Rightarrow \text{Arg}(Z(\gamma)) = \theta   .$$

The virtual dimension of $\overline{\mathcal{M}}_k (\beta, J_{\theta})$ is $-1$. For each strictly convex sector $S \subset \R^2$, we consider the augmented moduli space  
\begin{equation} \label{augmented}
  \overline{\mathcal{M}}_k(\beta) =  \bigsqcup_{ \theta | \R \cdot e^{ i \theta } \subset S   }   \overline{\mathcal{M}}_k (\beta, J_{\theta}) \times \{ \theta \}
\end{equation}
that has virtual dimension zero.
The evaluation map 
$$ \text{ev}_i : \overline{\mathcal{M}}_k(\beta)  \rightarrow L = \Sigma \times I_S $$
is given by $ \text{ev}_i = ( \text{ev}_{\theta,i}, \theta  )   $,
where $\text{ev}_{\theta,i}: \overline{\mathcal{M}}_k (\beta, J_{\theta})  \rightarrow \Sigma $ is the usual evaluation map on the boundary marked point labeled $i$.


Fix a norm $ \parallel \dot \parallel  $ on $\Gamma_{\R}$.  It is a standard fact (easy to prove)  that there exists a constant $C$ such that 
$$  \overline{\mathcal{M}}_k(\beta)  \neq \emptyset \Rightarrow \parallel \beta \parallel  \leq C |Z(\beta)| .$$
The quadratic form $Q$ is defined as
$$   Q(\beta) = -  \parallel \beta \parallel ^2  + C^2 |Z(\beta)|^2   .$$

\begin{proposition}
To the pair $(X, \Sigma)$ it is associated a natural string stability structure.
\end{proposition}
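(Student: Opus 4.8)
The plan is to construct the string stability structure directly from the augmented moduli spaces $\overline{\mathcal{M}}_k(\beta)$ of pseudoholomorphic disks introduced above, and then verify the three defining axioms (spin-structure equivariance, factorization under decomposition of $\beta$, and factorization under splitting of the sector $S$) by appealing to the corresponding structural results of \cite{OGW3}. First I would fix the quadratic form $Q$ as defined in the previous paragraph and observe that, by the stated estimate $\|\beta\| \le C|Z(\beta)|$, the augmented moduli space $\overline{\mathcal{M}}_k(\beta)$ is empty unless $Q(\beta) \ge 0$, so only charges in $\Gamma(S,Z,Q)$ contribute; this is what makes the Multi Disk complex the right receptacle. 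Then, following the construction of Multi-Curve Homology in \cite{OGW3}, I would use the evaluation maps $\mathrm{ev}_i : \overline{\mathcal{M}}_k(\beta) \to L = \Sigma \times I_S$ together with a choice of virtual fundamental chain on the (virtually zero-dimensional) augmented moduli space to produce, for each decorated tree $T$, a current $W^{\sigma}_{\beta,S}(T) \in \mathcal{C}_0(T,\{E_0\})$; the decomposition of the boundary of the compactified moduli space into codimension-one strata (disk bubbling at interior, nodal degenerations along edges, and wall-crossing in $\theta$) is exactly what produces $\hat{\partial} = \partial - \delta - \tilde\partial = 0$, so that $W^{\sigma}_{\beta,S}$ defines a class in $MDH_0(\beta;S,Z,Q)$. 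The forgetful-compatibility of Definition \ref{forget} holds because the boundary of a bubbled disk configuration is literally a product of boundary circles $\partial\beta_v$, which is the geometric content of that axiom.

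Next I would check the axioms one at a time. For spin-structure equivariance $W^{\epsilon\sigma}_{\beta,S} = \epsilon(\partial\beta)\,W^{\sigma}_{\beta,S}$: the spin structure enters only through the orientation of the determinant line of the $\bar\partial$-operator on the disk, and changing $\sigma$ by $\epsilon \in H^1(\Sigma,\Z_2)$ multiplies the orientation of $\overline{\mathcal{M}}_k(\beta)$ by $\epsilon$ evaluated on the boundary class $[\partial\beta]$; this is a standard computation (it appears in \cite{OGW3}) and carries over verbatim since the augmentation in $\theta$ does not affect the boundary orientation. For the factorization property (\ref{factorization1}) under $\beta = \beta_1 + \cdots + \beta_r$, I would note that the restriction map $\mathcal{F}_{\beta_1,\dots,\beta_r}$ corresponds on the level of moduli spaces to looking at those tree configurations whose vertex charges refine the given decomposition, and the gluing/splitting theorem for pseudoholomorphic disks identifies the relevant stratum with a fibered product of the $\overline{\mathcal{M}}(\beta_i)$, the automorphism factor $\mathrm{Aut}\{\beta_1,\dots,\beta_r\}$ accounting for relabelling identical summands — again this is the content of the analogous statement in \cite{OGW3} and the proof transfers. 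For (\ref{factorization2}) under $S = S_1 \sqcup S_2$ in clockwise order, the point is simply that $\overline{\mathcal{M}}_k(\beta)$ over $S$ is the disjoint union of the pieces over $S_1$ and over $S_2$ together with the configurations having one disk valued in each, which is the definition of the cup product across sectors; here one must use that $I_{S} = I_{S_1} \sqcup I_{S_2}$ as intervals and that no disk can have boundary simultaneously in both sub-sectors since $\mathrm{Arg}(Z(\beta))$ is a single well-defined angle.

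The main obstacle, and the step I would treat with the most care, is the construction of the virtual fundamental chain itself: the moduli spaces $\overline{\mathcal{M}}_k(\beta,J_\theta)$ have virtual dimension $-1$ and are only made zero-dimensional after the augmentation over $\theta \in I_S$, so one needs a coherent system of perturbations (Kuranishi structures, or the polyfold/virtual technique used in \cite{OGW3}) compatible with all the boundary strata simultaneously and with the forgetful maps, in order that the resulting chains genuinely land in the subcomplex $\hat{\mathcal{C}}_\bullet$ and satisfy $\hat\partial W = 0$ on the nose. In the non-compact case one additionally needs the assumed convexity at infinity to guarantee that the relevant moduli spaces are compact so that these chains are well-defined; I would state this as a standing hypothesis rather than re-prove it. Granting the transversality package of \cite{OGW3} — which is what the phrase ``the same consideration as in \cite{OGW3}'' in the introduction refers to — the verification of the three axioms is then essentially formal, and I would present the proof as: (i) define $W^{\sigma}_{\beta,S}$ via the augmented moduli space and the perturbation scheme of \cite{OGW3}; (ii) invoke the boundary stratification to get $\hat\partial = 0$; (iii) check the three axioms by the geometric arguments sketched above.
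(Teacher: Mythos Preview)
Your overall strategy coincides with the paper's: define $W^{\sigma}_{\beta,S}$ as the virtual fundamental chain of the augmented multi-disk moduli spaces $\overline{\mathcal{M}}_{(T,m)}$ exactly as in \cite{OGW3}, and then verify the three axioms. The spin-equivariance argument and the general remarks about compactness and Kuranishi structures are fine and match what the paper assumes.

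The gap is in your verification of the factorization property~(\ref{factorization1}) (and, by the same token,~(\ref{factorization2})). You write that the restriction $\mathcal{F}_{\beta_1,\dots,\beta_r}$ ``identifies the relevant stratum with a fibered product of the $\overline{\mathcal{M}}(\beta_i)$''. But the cup product in $MDH$ is \emph{not} a plain fibered product: by formula~(\ref{cup-formula}), for a pair $((T_1,\{E_{i,1}\}_{0\le i\le l}),(T_2,\{E_{i,2}\}_{0\le i\le l}))$ one has
\[
(W_1\cup W_2)=\sum_{0\le r\le l} W_1(T_1,\{E_{i,1}\}_{0\le i\le r})\times W_2(T_2,\{E_{i,2}\}_{r\le i\le l}),
\]
a sum over all ways of splitting the filtration index. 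Nothing in ``gluing/splitting of pseudoholomorphic disks'' produces this sum; the naive restriction of $W_{\beta,S}$ along the union-of-trees map~(\ref{uniontree1}) gives a single term, not the combination above. So as written your argument does not establish that $\mathcal{F}_{\beta_1,\beta_2}(W_{\beta,S})$ equals $W_{\beta_1,S}\cup W_{\beta_2,S}$ rather than some other chain in the same homology class.

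The paper closes this gap by a concrete combinatorial device. One realizes the $l$-simplex parametrizing the filtration as a union of products,
\[
\Delta_l \;=\; \Bigl(\bigsqcup_{0\le v\le l}[0,\dots,v]\times[v,\dots,l]\Bigr)\big/\!\sim,
\]
which induces an isomorphism
\[
\overline{\mathcal{M}}_{(T_1,\{E_{i,1}\}),(T_2,\{E_{i,2}\})}\;\cong\;\bigsqcup_{0\le v\le l}\overline{\mathcal{M}}_{(T_1,\{E_{i,1}\}_{0\le i\le v})}\times\overline{\mathcal{M}}_{(T_2,\{E_{i,2}\}_{v\le i\le l})}.
\]
One then equips this space with \emph{two} coherent perturbations: the product perturbation coming from the right-hand side, whose virtual class is by construction $W_{\beta_1,S}\cup W_{\beta_2,S}$; and the pulled-back perturbation $\mathfrak{s}_{(T_1\sqcup T_2,\{E_{i,1}\sqcup E_{i,2}\})}$, whose virtual class is $\mathcal{F}_{\beta_1,\beta_2}(W_{\beta,S})$. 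Independence of the class in $MDH_0$ from the choice of perturbation then gives~(\ref{factorization1}). The same mechanism handles~(\ref{factorization2}). Your sketch should incorporate this simplex-decomposition/two-perturbations step; without it the factorization axioms are asserted rather than proved.
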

\begin{proof}
As in \cite{OGW3}, use the moduli spaces (\ref{augmented}) to define moduli spaces $\overline{\mathcal{M}}_{(T,m)}$ for every $(T,m) \in \mathfrak{T}_{k,l}(\beta,S,Z,Q)$ and define $W^{\sigma}_{\beta,S} \in MDH_0(\beta.S,Z.Q)$ that is well defined in homology. The argument is completely analogous to the one of \cite{OGW3}.   We want to prove that this data define a string stability structure. For this we need to to prove that the propriety (\ref{factorization1}) holds (the proof of (\ref{factorization2}) is similar).

Since the spin structure is fixed, we shall omit $\sigma$ in the notation below. 
We need to prove that the image of $W_{\beta,S}$ under (\ref{restriction}) is given by 
$$ [ W_{\beta_1,S} \cup W_{\beta_2,S} \cup ... \cup  W_{\beta_r,S} ] / \text{Aut} \{ \beta_1,\beta_2,...,\beta_r  \}$$


Consider for simplicity the case $r=2$. 
First define $W_{(\beta_1,\beta_2,S)} \in MDH_0((\beta_1,\beta_2,S,Z,Q))$ using the moduli space associated to $\mathfrak{T}(\beta_1,\beta_2,S,Z,Q)$. From the perturbations of the moduli spaces $\overline{\mathcal{M}}_{(T,m)}$, we now define two perturbations of the moduli spaces  $ \overline{\mathcal{M}}_{ (T_1,\{ E_{i,1}  \}) , (T_2, \{ E_{i,2}  \})} $.

Observe first that we can realize the $l$-simplex $\Delta_l= [0,1,...,l]$ as product of simplexes using the homeomerphism
$$ \Delta_l = (\bigsqcup_{0 \leq v \leq l} [0,1,...,v] \times [v,v+1 ...,l] )/ \sim $$
where the face $\partial_{v+1} [0,1,...,v, v+1] \times [v+1, ...,l]$ of  $[0,1,...,v, v+1] \times [v+1, ...,l]$ is identified with the face $ [0,1,..., v] \times \partial_{v} [v, ...,l]$ of  $[0,1,...,v] \times [v, ...,l]$. 
This decomposition induces an isomorphism of spaces 
\begin{equation} \label{moduli-factorization}
 \overline{\mathcal{M}}_{ (T_1,\{ E_{i,1}  \}) , (T_2, \{ E_{i,2}  \})} = \bigsqcup_{0 \leq v \leq l}  \overline{\mathcal{M}}_{(T_1,  \{ E_{i,1}  \}_{0 \leq i \leq v})} \times \overline{\mathcal{M}}_{(T_2,  \{ E_{i,2}  \}_{v \leq i \leq l})}.
\end{equation}
Define on $ \overline{\mathcal{M}}_{ (T_1,\{ E_{i,1}  \}) , (T_2, \{ E_{i,2}  \})} $ the Kuranishi structure and perturbation induced by the right side of (\ref{moduli-factorization}):
$$\mathfrak{s}_{ (T_1,\{ E_{i,1}  \}) , (T_2, \{ E_{i,2}  \})}=  \bigsqcup_v (\mathfrak{s}_{(T_1,  \{ E_{i,1}  \}_{0 \leq i \leq v})} \times \mathfrak{s}_{(T_2,  \{ E_{i,2}  \}_{v \leq i \leq l})} ) .$$ 
The associated virtual fundamental class is by definition $ W_{\beta_1,S} \cup W_{\beta_2,S} \in MDH_0(\beta_1,\beta_2,S,Z,Q)$.


The second perturbation of  $ \overline{\mathcal{M}}_{ (T_1,\{ E_{i,1}  \}) , (T_2, \{ E_{i,2}  \})} $ is defined using the pull-back of (\ref{uniontree1})
$$\mathfrak{s}_{ (T_1,\{ E_{i,1}  \}) , (T_2, \{ E_{i,2}  \})}  = \mathfrak{s}_{ (T_1 \sqcup T_2,\{ E_{i,1}  \sqcup E_{i,2}  \})} $$ 
The associated virtual fundamental class is the image of $W_{\beta,S}$ on  $MDH_0(\beta_1,\beta_2,S,Z,Q)$.

We know that the representative on $MDH_0(\beta_1,\beta_2,S,Z,Q)$ does not depend on the perturbation. Propriety (\ref{factorization1}) then follows. 

\end{proof}

\begin{theorem}
To an isopoty of the complex lagrangian $\Sigma$ it is associated a variation of string stability structures. In particular the element (\ref{WCF2}) is invariant when we cross a wall of first type.
\end{theorem}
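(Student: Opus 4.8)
The plan is to reduce the theorem to two separate assertions that have already been prepared by the machinery of the paper: first, that an isotopy of $\Sigma$ inside $X$ produces a genuine family of string stability structures in the technical sense just defined; and second, that this family has the invariance property required to conclude invariance of \eqref{WCF2} away from walls of the second type. For the first part, I would start from the observation made in the previous section that the topological data $(\Sigma,\Gamma,\partial)$ are invariant under isotopies of $\Sigma$ in an obvious sense, so along an isotopy $\{\Sigma_t\}_{t\in[0,1]}$ the only things that move are the central charge $Z_t(\beta)=\int_\beta \Omega$, the auxiliary norm/constant entering $Q_t$, and the family of almost complex structures $J_\theta$ used to build the augmented moduli spaces \eqref{augmented}. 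I would check that $Z_t$ varies continuously, that a uniform constant $C$ (hence a fixed enough quadratic form $Q$, after possibly enlarging $C$ once for the whole compact family) can be chosen, and that the compactness statement $\overline{\mathcal{M}}_k(\beta)\neq\emptyset\Rightarrow \|\beta\|\le C|Z(\beta)|$ holds uniformly in $t$; this is the "convexity at infinity" input and is where I would spend the most care in the non-compact case.

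Next I would invoke the construction in the proof of the previous Proposition: for each $t$ the augmented moduli spaces $\overline{\mathcal{M}}_{(T,m)}$ carry Kuranishi structures and perturbations yielding $W^\sigma_{\beta,S,t}\in MDH_0(\beta,S,Z_t,Q)$, and the factorization properties \eqref{factorization1}, \eqref{factorization2} hold for each fixed $t$ by exactly the argument already given. So for each $t$ we get a string stability structure; what remains is to see that the assignment $t\mapsto (Z_t,\{W^\sigma_{\beta,S,t}\})$ is a \emph{variation} in the sense defined above, i.e. that $W^\sigma_{\beta,S,t}$ is locally constant in $t$ as long as $Z_t$ does not meet an $(S,\beta)$-wall of the second type. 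This is the Open Gromov-Witten invariance statement: as $t$ varies in a subinterval avoiding walls of the second type, the augmented moduli spaces $\overline{\mathcal{M}}_k(\beta,J_\theta)$ over $\theta$ with $\R\cdot e^{i\theta}\subset S$ fit into a cobordism (a one-parameter family of Kuranishi spaces), and the same argument as in \cite{OGW3} shows the resulting element of $MDH_0$ — which lives in the subcomplex of forgetful-compatible chains — is unchanged. The role of the wall-of-second-type hypothesis is precisely to prevent boundary contributions where $Z_t(\beta_1)\in\partial S$, i.e. where a disk component's charge argument escapes the sector $S$; away from such $t$ the family of moduli spaces stays proper over the closed sector and no chains leak out of $\Gamma(S,Z_t,Q)$.

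Finally, granting that $t\mapsto(Z_t,\{W^\sigma_{\beta,S,t}\})$ is a variation of string stability structures not hitting a $\beta$-wall of the second type, the last Proposition of the previous section applies verbatim and gives that the component of \eqref{WCF2} in $U(\mathfrak{g}_{\Gamma(S,Z,Q)})_\beta$ is invariant; since $\beta$ was arbitrary, \eqref{WCF2} itself is invariant, which by \eqref{KS-factotization}--\eqref{WCF2} is exactly the statement that the rational invariants $a_\beta$, assembled into the Kontsevich-Soibelman product, are unchanged across a wall of the first type (which is never a wall of the second type). I expect the main obstacle to be the uniform compactness/Gromov-compactness estimate along the isotopy in the non-compact setting: one must ensure that the constant $C$ and the convexity-at-infinity condition survive the whole family so that the augmented moduli spaces remain compact and the Kuranishi cobordism is proper; once that is in hand, everything else is a direct citation of the constructions already carried out for a single string stability structure.
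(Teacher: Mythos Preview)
Your proposal is correct and follows the same route as the paper, only with considerably more detail: the paper's entire proof is the one-line observation that $W$ is invariant in $MDH$ because there are no holomorphic disks of augmented dimension $-1$, which is exactly the dimension/cobordism argument you spell out. Your discussion of uniform compactness and the role of the $(S,\beta)$-wall of second type is implicit in the paper's standing convexity-at-infinity hypothesis and in the definition of a variation, but is not addressed explicitly there.
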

\begin{proof}
 $W$ is invariant as element of $ MDH$ since there are not holomorphic disks of augmented dimension $-1$.
\end{proof}

\end{document}